\theoremstyle{definition}
\theoremstyle{definition}
\theoremstyle{plain}
\newtheorem{theo}{Theorem}
\theoremstyle{plain}
\theoremstyle{plain}
\theoremstyle{plain}
\theoremstyle{plain}
\theoremstyle{plain}
\newtheorem{thm}{Theorem}[subsection]
\theoremstyle{definition}
\newtheorem{ex}[thm]{Example}
\theoremstyle{definition}
\theoremstyle{definition}
\newtheorem{proc}[thm]{Procedure}
\theoremstyle{definition}
\theoremstyle{definition}
\theoremstyle{definition}
\newtheorem{rem}[thm]{Remark}
\theoremstyle{plain}
\newtheorem{prop}[thm]{Proposition}
\theoremstyle{plain}
\newtheorem{lem}[thm]{Lemma}
\theoremstyle{plain}
\theoremstyle{definition}
\theoremstyle{definition}
\theoremstyle{plain}
\theoremstyle{definition}
\theoremstyle{definition}
\numberwithin{equation}{section}
\def\qed{\hfill $\Box$}
\def\F{\mathbb{F}}
\def\deg{{\rm deg}}
\newcommand{\MSC}[1]{\textbf{{2010 Mathematical Subject Classification: }} #1}
\begin{document}

\title
{\bf Genus-five hyperelliptic or trigonal curves with many rational points in characteristic three}
\author
{Momonari Kudo\thanks{Graduate School of Information Science and Technology, The University of Tokyo.}
\ and Shushi Harashita\thanks{Graduate School of Environment and Information Sciences, Yokohama National University.}}

\providecommand{\keywords}[1]{\textbf{\textit{Key words---}} #1}
\maketitle

\begin{abstract}
The number $N_9(5)$, the maximal number of
$\F_9$-rational points on curves over $\F_9$ of genus $5$ is unknown, but
it is known that $32 \le N_9(5)\le 35$.
In this paper, we enumerate
hyperelliptic curves and trigonal curves over $\F_3$ which have
many $\F_9$-rational points (and $\F_3$-rational points),
especially the maximal number of $\F_9$-rational points of those curves is $30$.
Kudo-Harashita studied the nonhyperelliptic and nontrigonal case,
where they found a new example
of curves (over $\F_3$) of genus five which attains $32$
and proved that there is no example attaining more than $32$, among sextic plane curves with mild singularities. We conclude
from the main results in this paper that we need to search sextic models (i.e., nonhyperelliptic and nontrigonal) with bad singularities,
in order to find a genus-five curve over $\F_3$ with at least $33$ $\F_9$-rational points.
\end{abstract}

\footnote[0]{\keywords Algebraic curves, Rational points, Curves of genus five, Hyperelliptic curves, Trigonal curves\\
\MSC 14G05, 14G15, 
14H50, 14Q05, 68W30}

\section{Introduction}

Throughout this paper, we use curve to mean a non-singular projective variety of dimension one.
Let $p$ be a rational prime, and let $\mathbb{F}_q$ denote the finite field of $q$ elements, where $q$ is a power of $p$.
For a curve $C$ of genus $g$ over $\mathbb{F}_q$, we denote by $\# C (K)$ the number of its $K$-rational points, where $K$ is a finite extension of $\mathbb{F}_q$.
Finding a curve $C$ with $\# C(\mathbb{F}_q)$ as large as possible (relative to $g$) is an interesting problem in its own right, and also such a curve is useful in applications such as coding theory (e.g., \cite{Hurt}). 
The most well-known (upper) bound on $\#C(\mathbb{F}_q)$ is the Hasse-Weil bound:
\begin{equation}\label{eq:HW}
 \# C(\mathbb{F}_q) \leq q + 1 + 2 g \sqrt{q}.
\end{equation}
For further improvements of \eqref{eq:HW} when $g$ is larger with respect to $q$, see e.g., \cite{Ihara}, \cite{Serre} and \cite{Lauter}.

Following Serre's paper~\cite{Serre}, we denote by $N_q(g)$ the maximal number of $\mathbb{F}_q$-rational points on a curve $C$ of genus $g$ over $\mathbb{F}_q$, namely,
\[
N_q(g) = \max \{ \# C (\mathbb{F}_q) : \mbox{$C$ is a curve of genus $g$ over $\mathbb{F}_q$}\},
\]
and then \eqref{eq:HW} implies $N_q(g) \leq q + 1 + 2 g \sqrt{q}$.
Here is a central problem in the study of algebraic curves over finite fields:
For a given $(q,g)$, determine (or bound) $N_q(g)$, and also find a curve of genus $g$ over $\mathbb{F}_q$ attaining $N_q(g)$, or its known best possible bound.
Indeed, the database of $N_q(g)$ has been developed at \cite{ManyPoints} since 2009, where the bound(s) or exact values of $N_q(g)$ have been updated by a number of researchers over the last 10 years.

According to \cite{ManyPoints}, the ``smallest" case among the unknown cases at this point is $(q,g)=(9,5)$.
The maximal number of the $\F_9$-rational points of the known $\F_9$-rational points on curves of genus $5$ over $\F_9$ is $32$.
But the theoretical upper-bound of $N_9(5)$ is $35$ \cite{Lauter}, i.e, so far we know $32 \leq N_{9}(5) \leq 35$.

The purpose of this paper is to contribute to determine $N_{9}(5)$, with a complete search on the space of curves of genus $5$ over $\mathbb{F}_3$.
So far as is known, there are four concrete examples attaining $\#C(\mathbb{F}_9)=32$:
The first example was found by van der Geer - van der Vlugt \cite{GV}, and other two examples were known, one was found by Fischer 
(cf. \cite{ManyPoints}) and the other was found by Ramos-Ramos in \cite{Ramos-Ramos}.
In \cite{KH-genus5generic}, Kudo and Harashita found a new example
with enumeration of generic curves $C$ over $\mathbb{F}_3$ of genus $5$ such that $\# C (\mathbb{F}_9) = 32$,
where ``generic curve" means a non-hyperelliptic and non-trigonal
curve of genus $5$ with a sextic model in ${\mathbb P}^2$ with mild singularities, see \cite{KH-genus5generic} for the precise definition.
They also proved that there is no generic curve over $\mathbb{F}_3$ with $\# C (\mathbb{F}_9) > 32$.
This paper investigates remaining cases: The hyperelliptic and trigonal cases.
Here are main theorems:

\begin{theo}\label{thm:main2}
The maximal number of $\# H(\mathbb{F}_{9})$ of genus-five hyperelliptic curves $H$ over $\mathbb{F}_3$ is $20$.
Moreover, there are exactly $573$ $\mathbb{F}_{9}$-isomorphism classes of genus-five hyperelliptic curves $H$ over $\mathbb{F}_3$ with $20$ $\mathbb{F}_{9}$-rational points, and there are exactly $419$ $\mathbb{F}_9$-isogeny classes of Jacobian varieties among them.
(We omit to write their Weil polynomials here.)
In Example \ref{ex:hyp} (1) of Subsection \ref{subsec:MainHyp}, examples of genus-five hyperelliptic curves $H$ over $\mathbb{F}_3$ with $\# H (\mathbb{F}_{9}) = 20$ will be given.
\end{theo}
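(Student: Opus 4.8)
The plan is to turn the statement into a finite computation: set up an explicit parameter space for genus-five hyperelliptic curves over $\F_3$, exhaust it, count $\F_9$-points, and then analyze the equations realizing the maximum.

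\emph{Reduction to binary forms.} Since $\mathrm{char}\,\F_3 = 3 \neq 2$, every genus-five hyperelliptic curve $H$ over $\F_3$ has an affine model $y^2 = f(x)$ with $f \in \F_3[x]$ squarefree of degree $11$ or $12$; homogenising $f$ to a binary form $F(x,z)$ makes the relevant parameter space exactly the set of squarefree binary forms of degree $12$ over $\F_3$ (the case $\deg f = 11$ corresponding to forms divisible by $z$ exactly once, and every squarefree degree-$12$ form yielding a smooth genus-five double cover by Riemann--Hurwitz). Two such forms $F, F'$ yield $K$-isomorphic curves, for $K \supseteq \F_3$, iff $F' = \lambda\,F^{\gamma}$ with $F^{\gamma}(v) = F(\gamma v)$, $\gamma \in \mathrm{GL}_2(K)$ and $\lambda$ a square in $K^{\times}$, while replacing $F$ by $\mu F$ with $\mu$ a non-square realises the quadratic twist; hence $K$-isomorphism classes of genus-five hyperelliptic curves correspond to orbits of squarefree degree-$12$ binary forms under $\mathrm{PGL}_2(K)$ (acting on the twelve roots in $\P^1$) refined by the $K^{\times}/(K^{\times})^2$-action by twisting. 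The theorem thus reduces to enumerating orbit representatives for the $\F_3$- and the $\F_9$-isomorphism relations and, for each, computing $\#H(\F_9)$ and --- when it equals $20$ --- the zeta function.

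\emph{Enumeration of representatives.} I would first list representatives of squarefree degree-$12$ binary forms over $\F_3$ modulo $\mathrm{PGL}_2(\F_3)$ (order $24$, acting sharply $3$-transitively on the four points of $\P^1(\F_3)$): organise the forms by their $\F_3$-factorisation type (a $\mathrm{PGL}_2$-invariant partition of $12$), and within each type normalise a suitable subconfiguration of roots to a fixed position, cutting the search down to a manageable transversal; a direct orbit computation then produces the $\F_3$-isomorphism classes together with their stabilisers, and appending quadratic twists completes the $\F_3$-picture. Passing to $\F_9$, one extends scalars and fuses the $\F_3$-classes that become conjugate under $\mathrm{PGL}_2(\F_9)$ (order $720$, sharply $3$-transitive on the ten points of $\P^1(\F_9)$); crucially, one must also fuse a curve with its quadratic twist, since the non-square $-1 \in \F_3^{\times}$ is a square in $\F_9^{\times}$, so the twist becomes trivial over $\F_9$. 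Keeping precise track of these fusions, and of automorphisms appearing only over $\F_9$, is what pins down the exact figure $573$.

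\emph{Point counts and isogeny classes.} For each representative $F$ I compute $\#H(\F_9)$ directly as $\sum_{P \in \P^1(\F_9)} \bigl(1 + \chi(F(P))\bigr)$, where $\chi$ is the quadratic character of $\F_9$ extended by $\chi(0) = 0$ (this is well defined on $\P^1$ because $\chi$ kills twelfth powers of scalars in $\F_9^{\times}$, and it gives fibre size $2$, $1$, $0$ over split, ramified, inert points respectively). The maximum of these values over all representatives is $20$, which proves the first assertion --- and this part needs no isomorphism reduction at all, $\#H(\F_9)$ being visibly an isomorphism invariant. Let $\mathcal{S}$ be the set of curves in the list with $\#H(\F_9) = 20$; then $|\mathcal{S}| = 573$. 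For the last statement, for each $H \in \mathcal{S}$ I compute $\#H(\F_{3^k})$ for $k = 1,\dots,5$, recover the Weil polynomial of $\mathrm{Jac}(H)$ over $\F_3$ via Newton's identities and the functional equation, square its roots to obtain the characteristic polynomial of $\F_9$-Frobenius on $\mathrm{Jac}(H)_{\F_9}$, and group equal polynomials; by Tate's isogeny theorem this groups the curves into their $\F_9$-isogeny classes of Jacobians, giving $419$ classes. Explicit defining equations are then recorded in Example~\ref{ex:hyp}.

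\emph{Main obstacle.} The substance of the proof is computational: making the orbit enumeration of squarefree degree-$12$ binary forms over $\F_3$ and $\F_9$ simultaneously feasible and provably exhaustive, and getting the isomorphism bookkeeping exactly right --- in particular the fusion of quadratic twists and the treatment of curves with extra automorphisms over $\F_9$, which is precisely what the delicate count $573$ (as opposed to $419$) is sensitive to. Care is also required to exclude degenerate forms (non-squarefree, or of lower degree, which would not yield a smooth genus-five curve) and to validate the output, e.g.\ by an independent reimplementation and by sanity checks against the Hasse--Weil and Oesterl\'e bounds, the functional equation of the zeta function, and the previously known examples.
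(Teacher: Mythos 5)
Your plan is correct and, at the top level, it is the same proof as the paper's: both reduce the theorem to an exhaustive, provably complete computer search over explicit models of genus-five hyperelliptic curves over $\mathbb{F}_3$, count $\mathbb{F}_9$-points for each model, classify the maximizing curves up to $\mathbb{F}_9$-isomorphism, and read off isogeny classes from Weil polynomials. The differences are in how the two sub-steps are carried out. For the search space, the paper does not enumerate $\mathrm{PGL}_2$-orbits of squarefree binary forms at all: it uses the normal form of Lemma \ref{ReductionHyperelliptic}(2) (since $3 \mid 2g+2$), namely $cy^2 = x^{12}+b_1x^{11}+b_2x^{10}+a_9x^9+\cdots+a_0$ with $(b_1,b_2)\in\{(1,0),(0,0),(0,1),(0,\epsilon)\}$ and $c\in\{1,\epsilon\}$, sweeps all coefficient tuples (Procedure \ref{proc:hyp}, Step 1, with the point count from Lemma \ref{lem:hyp_rat}; your character-sum formula is an equivalent substitute), and only afterwards sorts the $12048$ maximizing models into $573$ classes by \emph{pairwise} isomorphism testing via the Gr\"obner-basis criterion $h\cdot F_1=\lambda^2 F_2$ of Remark \ref{rem:hyp}. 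That brute-force pairwise testing is computationally heavier (about 30 hours in the paper) but sidesteps exactly the bookkeeping you identify as the main obstacle: the interaction of the $\mathrm{PGL}_2(\mathbb{F}_9)$-action with the square-class of the scalar (twist fusion, since $-1$ becomes a square in $\mathbb{F}_9$) and with extra automorphisms; your orbit-theoretic route would be faster if implemented, but it is precisely where an error in the count $573$ could creep in, so you would still want an independent check of the kind the paper gets for free. For the isogeny classes, the paper computes $\mathbb{F}_9$-Weil polynomials directly from $\#H(\mathbb{F}_{9^e})$, $e=1,\dots,5$, via Newton's identities and the functional equation (Remark \ref{rem:WeilPoly}), whereas you compute the $\mathbb{F}_3$-Weil polynomial and square its roots; both are valid and both invoke Tate's theorem to identify equal Frobenius characteristic polynomials with $\mathbb{F}_9$-isogeny of Jacobians. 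In both treatments the numerical outputs ($20$, $573$, $419$) rest on the machine computation, not on a further theoretical argument.
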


\begin{theo}\label{thm:main}
The maximal number of $\#C(\mathbb{F}_{9})$ of genus-five trigonal curves $C$ over $\mathbb{F}_3$ is $30$.
Moreover, there are exactly eight $\mathbb{F}_{9}$-isomorphism classes of genus-five trigonal curves $C$ over $\mathbb{F}_3$ with $30$ $\mathbb{F}_{9}$-rational points, whose Weil polynomials are
\begin{equation}\label{Weil:1}
(t + 3)^4 (t^6 + 8 t^5 + 44 t^4 + 149 t^3 + 396 t^2 + 648 t + 729); 
\end{equation}
\begin{equation}\label{Weil:2}
(t^2 + 5 t + 9)(t^8 + 15 t^7 + 112 t^6 + 549 t^5 + 1927 t^4 + 4941 t^3 + 9072 t^2 + 10935 t + 6561);
\end{equation}
\begin{equation}
\begin{split}
& t^{10} + 20 t^9 + 196 t^8 + 1247 t^7 + 5714 t^6 + 19667 t^5 \\
 & + 51426 t^4 + 101007 t^3 + 142884 t^2 + 131220 t + 59049; 
 \end{split}\label{Weil:3}
\end{equation}
\begin{equation}\label{Weil:4}
(t^2 + 2 t + 9) (t^4 + 9 t^3 + 37 t^2 + 81 t + 81)^2;
\end{equation}
\begin{equation}\label{Weil:5}
\begin{split}
& t^{10} + 20 t^9 + 200 t^8 + 1299 t^7 + 6030 t^6 + 20843 t^5 \\
& + 54270 t^4 + 105219 t^3 + 145800 t^2 + 131220 t + 59049;
\end{split}
\end{equation}
\begin{equation}\label{Weil:6}
(t + 3)^2 (t^2 + 5 t + 9) (t^6 + 9 t^5 + 49 t^4 + 177 t^3 + 441 t^2 + 729 t + 729);
\end{equation}
\begin{equation}\label{Weil:7}
\begin{split}
& t^{10} + 20 t^9 + 194 t^8 + 1210 t^7 + 5433 t^6 + 18539 t^5 \\
& + 48897 t^4 + 98010 t^3 + 141426 t^2 + 131220 t + 59049;
\end{split}
\end{equation}
\begin{equation}\label{Weil:8}
(t^2 + 2 t + 9)(t^4 + 9 t^3 + 37 t^2 + 81 t + 81)^2.
\end{equation}
In the proof given in Subsection \ref{subsec:MainTri}, examples of genus-five trigonal curves $C$ over $\mathbb{F}_3$ with $\# C (\mathbb{F}_{9}) = 30$ will be given.
\if 0
The maximal number of $\# T(\mathbb{F}_{9})$ of genus-five trigonal curves $T$ over $\mathbb{F}_3$ is $30$.
Moreover, there are exactly eight $\mathbb{F}_{9}$-isomorphism classes of genus-five trigonal curves $T$ over $\mathbb{F}_3$ with $30$ $\mathbb{F}_{9}$-rational points, see Section \ref{sec4} for examples of genus-five trigonal curves $T$ over $\mathbb{F}_3$ with $\# T (\mathbb{F}_{9}) = 30$ with their Weil polynomials.
\if0
whose Weil polynomials are
\begin{equation}\label{Weil:1}
(t + 3)^4 (t^6 + 8 t^5 + 44 t^4 + 149 t^3 + 396 t^2 + 648 t + 729); 
\end{equation}
\begin{equation}\label{Weil:2}
(t^2 + 5 t + 9)(t^8 + 15 t^7 + 112 t^6 + 549 t^5 + 1927 t^4 + 4941 t^3 + 9072 t^2 + 10935 t + 6561);
\end{equation}
\begin{equation}
\begin{split}
& t^{10} + 20 t^9 + 196 t^8 + 1247 t^7 + 5714 t^6 + 19667 t^5 \\
 & + 51426 t^4 + 101007 t^3 + 142884 t^2 + 131220 t + 59049; 
 \end{split}\label{Weil:3}
\end{equation}
\begin{equation}\label{Weil:4}
(t^2 + 2 t + 9) (t^4 + 9 t^3 + 37 t^2 + 81 t + 81)^2;
\end{equation}
\begin{equation}\label{Weil:5}
\begin{split}
& t^{10} + 20 t^9 + 200 t^8 + 1299 t^7 + 6030 t^6 + 20843 t^5 \\
& + 54270 t^4 + 105219 t^3 + 145800 t^2 + 131220 t + 59049;
\end{split}
\end{equation}
\begin{equation}\label{Weil:6}
(t + 3)^2 (t^2 + 5 t + 9) (t^6 + 9 t^5 + 49 t^4 + 177 t^3 + 441 t^2 + 729 t + 729);
\end{equation}
\begin{equation}\label{Weil:7}
\begin{split}
& t^{10} + 20 t^9 + 194 t^8 + 1210 t^7 + 5433 t^6 + 18539 t^5 \\
& + 48897 t^4 + 98010 t^3 + 141426 t^2 + 131220 t + 59049;
\end{split}
\end{equation}
\begin{equation}\label{Weil:8}
(t^2 + 2 t + 9)(t^4 + 9 t^3 + 37 t^2 + 81 t + 81)^2.
\end{equation}
In the proof given in Subsection \ref{subsec:MainTri}, examples of genus-five trigonal curves $T$ over $\mathbb{F}_3$ with $\# T (\mathbb{F}_{9}) = 30$ will be given.
\fi
\fi
\end{theo}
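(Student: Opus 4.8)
The plan is to prove Theorem~\ref{thm:main} by an exhaustive computer-assisted enumeration of all genus-five trigonal curves over $\mathbb{F}_3$, paralleling the hyperelliptic case (Theorem~\ref{thm:main2}) and the generic case of \cite{KH-genus5generic}. First I would set up the canonical model: a non-hyperelliptic curve $C$ of genus $5$ has a canonical embedding in $\mathbb{P}^4$, and being trigonal it lies on a rational normal scroll (by the geometric version of the Enriques--Babbage--Petri theorem, the canonical image is either an intersection of quadrics, or trigonal, or a plane quintic). Thus I would parametrize trigonal genus-five curves as smooth curves in a Hirzebruch surface $\mathbb{F}_e$ (or equivalently as triple covers of $\mathbb{P}^1$ via Maroni theory, with Maroni invariant constrained by $g=5$), writing down an explicit affine equation of the form $y^3 + a(x) y^2 + b(x) y + c(x) = 0$ (or the bidegree-$(3,m)$ equation on the scroll) with the degree bounds forced by genus five, and with coefficients ranging over $\mathbb{F}_3$. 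In characteristic three one must be careful that the $y^3$ term cannot be eliminated by completing the cube, so the Artin--Schreier-type subtlety has to be handled; the cases $a(x)\equiv 0$ (purely "wild"/inseparable-looking) and $a(x)\not\equiv 0$ should be organized separately, and additive shifts $y \mapsto y + t(x)$ together with the $\mathrm{PGL}_2(\mathbb{F}_3)$-action on $x$ and scaling should be used to cut down the parameter space to a manageable set of normal forms.

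Next I would, for each normal form in the reduced parameter space, (i) test smoothness and the genus-five / trigonal conditions, (ii) count $\#C(\mathbb{F}_9)$ directly by running over the $\mathbb{F}_9$-points of the scroll, retaining only those curves with a large point count, and then (iii) for the surviving curves compute $\#C(\mathbb{F}_{3^k})$ for $k=1,\dots,5$ to pin down the numerator of the zeta function, i.e. the Weil polynomial $P(t)=\prod(1-\alpha_i t)$, via Newton's identities from the power sums $S_k = 3^k + 1 - \#C(\mathbb{F}_{3^k})$. Since $g=5$ we need $k$ up to $5$, and the functional equation $\alpha_i \mapsto 9/\alpha_i$ halves the work. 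Having found the curves attaining the maximum, I would reduce them up to $\mathbb{F}_9$-isomorphism (using the scroll automorphisms and $y$-shifts as the isomorphism group of trigonal models, cf. the structure used for the hyperelliptic classification) to obtain the claimed count of eight isomorphism classes, and group the Weil polynomials \eqref{Weil:1}--\eqref{Weil:8} accordingly; the fact that the maximum is exactly $30$ is then the assertion that no curve in the (finite, fully traversed) parameter space exceeds $30$, while $30$ is realized by the explicit examples that will be exhibited in Subsection~\ref{subsec:MainTri}.

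The main obstacle I expect is controlling the size of the search space in characteristic three so that the enumeration is actually feasible: the naive space of trigonal equations over $\mathbb{F}_3$ is astronomically large, so the real content of the proof is the reduction to normal forms. This means carefully analyzing the action of the automorphism group of the ambient scroll $\mathbb{F}_e$ on the linear system defining the curve—including the $\mathbb{F}_3$-rational structure, the scaling $y\mapsto \lambda y$, the shifts $y \mapsto y + t(x)$ with $\deg t$ bounded by the Maroni data, and $\mathrm{PGL}_2$ on the base—and using these to fix as many coefficients as possible (e.g. normalizing leading coefficients, killing certain intermediate terms, and exploiting that over $\mathbb{F}_3$ one may assume a chosen $\mathbb{F}_9$-point or $\mathbb{F}_3$-point is at a standard location). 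A secondary obstacle is the separable-versus-inseparable trichotomy for the degree-three cover in characteristic three (the "generic", "Artin--Schreier", and mixed cases), each of which yields a different shape of equation and must be enumerated on its own; and finally one must argue that trigonal models with the relevant bad behaviour (non-maximal Maroni invariant, i.e. curves on $\mathbb{F}_4$ rather than $\mathbb{F}_2$) are included. Once the parameter space is provably exhausted, the point-count maximum, the isomorphism-class count, and the Weil-polynomial list follow by direct computation, and the comparison with the bound $33$ promised in the abstract is an immediate corollary of $30<33$.
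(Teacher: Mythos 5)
Your plan is sound and shares the paper's computational skeleton (reduce to normal forms with finitely many coefficients, exhaust them by machine, count points, get Weil polynomials from $\#C(\mathbb{F}_{3^k})$, $k\le 5$, via Newton's identities and the functional equation, then classify up to $\mathbb{F}_9$-isomorphism), but you parametrize trigonal curves differently: you work with the triple-cover/Maroni model $y^3+a(x)y^2+b(x)y+c(x)=0$ on a Hirzebruch surface, whereas the paper uses the singular plane quintic model given by $\omega_C\otimes\mathcal{L}^{-1}$, whose unique singular point is a split node, a non-split node, or a cusp; the three cases are reduced to the explicit characteristic-three normal forms of Propositions \ref{ReductionSplitNode}, \ref{ReductionNonSplitNode} and \ref{ReductionCusp}, the point count of the normalization is recovered from the plane model by the simple correction of Lemma \ref{lem:tri_rat}, and $\mathbb{F}_9$-isomorphism is tested by projective equivalence of the quintics via Gr\"obner bases (Remark \ref{rem:isom}), a tool already available from the authors' earlier work on trigonal curves. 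What the quintic model buys is precisely this off-the-shelf isomorphism test plus a reduction carried out by coordinate changes fixing the singular point; what your scroll model buys is a direct handle on the trigonal fibration, but you would have to justify that every $\mathbb{F}_9$-isomorphism of the curves is induced by an automorphism of the scroll model (this does hold for $g=5$ because the degree-three pencil is unique, as the paper notes) and you would have to re-derive the characteristic-three normal forms yourself, which is the real content of Section \ref{sec:red} in either formulation. Two of your anticipated obstacles in fact evaporate: a purely inseparable degree-three map in characteristic three would force genus zero, so only the separable (and, after removing the $y^2$ term being impossible, the genuinely char-3) shapes occur; and for genus five the adjunction computation on $\mathbb{F}_n$ forces $n=1$, so there is no Maroni-invariant case distinction (your worry about curves on $\mathbb{F}_4$ versus $\mathbb{F}_2$ concerns scrolls of the wrong parity). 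As with the paper, the statement itself (maximum $30$, eight classes, the listed Weil polynomials) ultimately rests on the exhaustive computation, not on anything checkable by hand, so your route is an acceptable alternative provided the normal-form reduction and the isomorphism test are written out with the same care.
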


We find from these theorems together with \cite[Theorem 1]{KH-genus5generic} that, in order to find a genus-five curve over $\mathbb{F}_3$ attaining $\#C(\mathbb{F}_9)>32$, there is no other way than searching sextic models (i.e., nonhyperelliptic and nontrigonal) given in \cite{KH-genus5generic} with bad singularities.

To prove Theorem \ref{thm:main2} (resp.\ Theorem \ref{thm:main}), we do a complete search over Magma~\cite{Magma}, \cite{MagmaHP} on the space of hyperelliptic (resp.\ trigonal) curves of genus $5$ over $\mathbb{F}_3$, with the following two ingredients:
\begin{enumerate}
\item An explicit formula for the number of rational points of hyperelliptic curves (resp.\ genus-five trigonal curves) is given in Subsection \ref{subsec:hyp} (resp.\ Subsection \ref{subsec:tri}).
\item To make the search terminate in real time, we also reduce the number of unknown coefficients in defining equations for our curves.
A reduction of defining equations for hyperelliptic curves (resp.\ genus-five trigonal curves) is presented in Subsection \ref{subsec:ReductionHyper} (resp.\ Subsection \ref{subsec:ReductionTrigonal}).
\end{enumerate}
The ingredients (1) and (2) for hyperelliptic curves are applicable for arbitrary genus $g \ge 1$ and arbitrary characteristic $\ne 2$.
For genus-five trigonal curves, (1) holds for arbitrary fields (characteristic not necessarily equal to $3$), whereas (2) works for every finite extension of $\mathbb{F}_3$.
Although the main theorems are argued for small finite fields ($\mathbb{F}_3$ and $\mathbb{F}_9$) of characteristic $3$, these ingredients might also derive fruitful applications for further study on algebraic curves over (other) fields.

The organization of this paper is as follows.
Section \ref{sec:pre} collects some results on counting rational points on a curve of genus $5$ with the classification of curves of genus $5$.
In Section \ref{sec:red} we give a reduction of hyperelliptic curves and a reduction of quintic models of trigonal curves (in characteristic $3$).
Section \ref{sec:main} is devoted to proving the main theorems (Theorems \ref{thm:main2} and \ref{thm:main}).


\subsection*{Acknowledgments}
This work was supported by JSPS Grant-in-Aid for Young Scientists 20K14301 and JSPS Grant-in-Aid for Scientific Research (C) 21K03159.
\section{Preliminaries}\label{sec:pre}
In this section, we collect some results
on counting rational points on a curve of genus $5$
with the classification of curves of genus $5$.

\subsection{Curves of genus 5}
There are three kinds of curves of genus $5$;
hyperelliptic, trigonal and the other case.
The main reference is \cite[Chap.~IV, Ex.~5.5]{Har}.

A hyperelliptic curve of genus $g$ over $K$ of characteristic $\ne 2$ is realized
as the desingularization of the projective closure of
\[
y^2 = f(x)
\]
for an $f(x) \in K[x]$ of degree $2g+2$.
Even if $\deg f = 2g+1$, we have a hyperelliptic curve of genus $g$,
but such a curve has an extra condition: the ramified points of the hyperelliptic fibration $C\to {\mathbb P}^1$ contains a $K$-rational point.

A trigonal curve $C$ is a curve admitting a morphism $\pi: C \to {\mathbb P}^1$  of degree $3$. If the genus of $C$ is $5$,
then the morphism $\pi$ is unique and the linear system $\omega_C\otimes {\mathcal L}^{-1}$ defines a morphism $\rho: C \to {\mathbb P}^2$,
where $\omega_C$ is the canonical sheaf and ${\mathcal L} =\pi^* O_{{\mathbb P}^1}(1)$. The image of $\rho$ turns out to be a quintic in ${\mathbb P}^2$
with single singular point, which is a $K$-rational point and is a node or a cusp. Thus, any trigonal curve of genus $5$ is realized as the desingularization of such a quintic in $\mathbb P^2$, see \cite[Subsection 2.1]{trigonal} for more details.

Finally it is known that any nonhyperelliptic and nontrigonal curve $C$
of genus $5$ is realized as a complete intersection of three quadrics in ${\mathbb P}^3$ \cite[Chap.~IV, Ex.~5.5]{Har}. 
But we need so many indeterminates to parametrize three quadrics.
To overcome the difficulty, in \cite{KH-genus5generic} the authors introduced
a sextic model of $\mathbb P^2$ constructed from $C$ and two distinct points on $C$, which enables us to parametrize these curves very efficiently.

\subsection{Rational points on hyperelliptic curves}\label{subsec:hyp}
The result in this subsection holds for arbitrary genus $g \ge 1$
and arbitrary characteristic $\ne 2$.
We do not require that the genus of $C$ is of genus $5$.

Consider a hyperelliptic curve $C$. Note that $C$ is the desingularization of
the projective closure of an affine model
\[
C^{\text{aff}}:\quad c y^2 = f(x)
\]
with a monic $f(x)\in K[x]$ of degree $2g+2$ and $c\in K^\times$.
We are concerned with the number of rational points on $C$.
\begin{lem}\label{lem:hyp_rat}
We have
\[
\#C(K) =\begin{cases}
\# C^{\text{\rm aff}}(K) + 2 & \text{if } c \in (K^\times)^2,\\
\# C^{\text{\rm aff}}(K) & \text{if } c \not\in (K^\times)^2 .
\end{cases}
\]
\end{lem}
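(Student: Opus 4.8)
The plan is to analyze what points lie on the smooth model $C$ but not on the affine chart $C^{\text{aff}}$, and to count them. The hyperelliptic map $C \to \mathbb P^1$ has degree $2$, so over each point of $\mathbb P^1$ there are either two rational points, one ramification point, or a pair of conjugate points. The affine model captures exactly the fibres over the affine line $\mathbb A^1 \subset \mathbb P^1$ (with $x$-coordinate running over $K$), together with possibly singular or missing behaviour at $x = \infty$. Since $\deg f = 2g+2$ is even, the fibre over $x=\infty$ consists of two points, and the key point is to decide when these two points are $K$-rational; everything over the affine line is already accounted for in $\#C^{\text{aff}}(K)$ by definition.

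First I would set up coordinates at infinity. Writing $f(x) = x^{2g+2} + (\text{lower order})$ with $f$ monic, substitute $x = 1/u$ and $y = v/u^{g+1}$; the affine model $c y^2 = f(x)$ becomes $c v^2 = u^{2g+2} f(1/u) =: \tilde f(u)$, where $\tilde f$ is again a polynomial in $u$ with $\tilde f(0) = 1$ (the leading coefficient of $f$). The smooth model $C$ is covered by the two affine charts $cy^2 = f(x)$ and $cv^2 = \tilde f(u)$, glued along $u = 1/x$, $v = y/x^{g+1}$. The points of $C$ not on the first chart are exactly the points of the second chart with $u = 0$, i.e. the solutions of $c v^2 = \tilde f(0) = 1$, namely $cv^2 = 1$.

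Next I would count these. The equation $cv^2 = 1$, i.e. $v^2 = c^{-1}$, has a solution $v \in K$ if and only if $c^{-1} \in (K^\times)^2$, equivalently $c \in (K^\times)^2$ (as $c \ne 0$); in that case it has exactly two solutions $v = \pm\sqrt{c^{-1}}$, giving two $K$-rational points at infinity, and if $c \notin (K^\times)^2$ it has no $K$-rational solution, so both points at infinity are conjugate over a quadratic extension and contribute nothing to $\#C(K)$. Since $C = C^{\text{aff}} \sqcup \{\text{points over } x=\infty\}$ as a set of $K$-points, adding these contributions yields exactly
\[
\#C(K) = \begin{cases} \#C^{\text{aff}}(K) + 2 & c \in (K^\times)^2, \\ \#C^{\text{aff}}(K) & c \notin (K^\times)^2, \end{cases}
\]
as claimed.

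The only subtlety — and the one point I would be careful about — is verifying that the chart $cv^2 = \tilde f(u)$ is genuinely smooth at the two points above $u=0$, so that these really are points of the nonsingular model $C$ and there are no further corrections (e.g. from a singular projective closure in $\mathbb P^2$). This is where $\deg f = 2g+2$ and $\tilde f(0) \ne 0$ matter: since $\tilde f(0) = 1 \ne 0$, the curve $cv^2 = \tilde f(u)$ is smooth at $u = 0$ for each of the two values of $v$ (the partial derivative in $v$ is $2cv \ne 0$ there, using $\mathrm{char}\,K \ne 2$ and $v \ne 0$), so no desingularization is needed near infinity and the count above is complete. One should also note that $C^{\text{aff}}$ itself, as the chart $cy^2 = f(x)$, may need desingularization at finitely many affine points, but those points all have $x \in K$ and hence are already included in $\#C^{\text{aff}}(K)$ when it is interpreted as the number of $K$-points of the smooth model restricted to this chart — so no double counting or omission occurs.
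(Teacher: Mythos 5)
Your proof is correct, and it reaches the same computation as the paper --- the whole lemma comes down to the fibre over $x=\infty$, which is cut out by $v^2=1/c$ and hence contributes $2$ or $0$ rational points according as $c$ is a square --- but the route to the smooth model at infinity is different. The paper starts from the projective closure in $\mathbb{P}^2$, passes to the chart $X=x/y$, $Z=z/y$ around $(0,1,0)$, and writes an explicit desingularization with new variables $u,v,w$ satisfying $u^g=vw$ and $cw^2=\sum_i a_i u^{2g+2-i}$, reading off the two points $(0,0,\pm\sqrt{1/c})$ above the singular point at infinity. You avoid the (singular) plane closure altogether by gluing the second standard affine chart $cv^2=\tilde f(u)$, $u=1/x$, $v=y/x^{g+1}$, and checking smoothness there via the partial derivative $2cv\neq 0$; this is cleaner in that no desingularization equations need to be exhibited, at the small cost of having to know (or verify) that the two glued charts really give the nonsingular projective model. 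One minor remark: your closing hedge about possible desingularization at affine points is vacuous in the intended setting --- since $C$ has genus $g$ the polynomial $f$ is squarefree, and then in characteristic $\neq 2$ the affine curve $cy^2=f(x)$ is already smooth, so $\#C^{\text{aff}}(K)$ is unambiguous and no further care is needed there.
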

\begin{proof}
The projective closure of $C^{\text{aff}}$ is
\[
cy^2 z^{2g} = z^{2g+2}f(x/z) = \sum_{i=0}^{2g+2} a_i x^iz^{2g+2-i}
\]
with $a_{2g+2}=1$. The desingularization of this around the infinity $(0,1,0)$ is given as follows. Consider the variables $X:=x/y$ and $Z:=z/y$.
The desingularization around $(X,Z)=(0,0)$ of
\[
cZ^{2g} = \sum_{i=0}^{2g+2} a_i X^iZ^{2g+2-i}
\]
is given by the equations
\[
u^g = vw,\quad c w^2 = \sum_{i=0}^{2g+2} a_i u^{2g+2-i}
\]
with new varibles $u,v,w$. Here the desingularization morphism is given by
$X=v$ and $Z=vu$,
whose inverse outside the origin is given by $u=Z/X$, $v=X$ and $w=Z^g/X^{g+1}$ . The points on the desingularization over $(X,Z)=(0,0)$ are $(u,v,w)=(0,0,\pm\sqrt{1/c})$. Thus we have the lemma.
\end{proof}

\subsection{Rational points on trigonal curves of genus $5$}\label{subsec:tri}
Let $C$ be a trigonal curve of genus $5$ over $K$.
Then $C$ is the desingularization of a quintic in $\mathbb P^2$ of the form
\[
F = q(x,y)z^3 + \varphi(x,y,z)
\]
with $q(x,y)$ is a quadratic form in $x,y$ over $K$
and $\varphi(x,y,z)\in K[x,y,z]$ is of degree 3
and of $z$-degree $\le 2$.
By the classification of quadratic forms,
$q(x,y)$ is isomorphic to $xy$, $x^2-\varepsilon y^2$ with $\varepsilon\not\in (K^\times)^2$ or $x^2$. The singular point in each case
is called {\it a split node}, {\it a non-split node} and {\it a cusp} over $K$.
Looking at the desingularization of the singular point, we have
\begin{lem}\label{lem:tri_rat}
Let $L$ be a finite field containing $K$. Then
\[
\#C(L) = \begin{cases}
\#V(F)(L) + 1 & \text{if the singular point is a split node over } L,\\
\#V(F)(L) - 1 & \text{if the singular point is a non-split node over } L,\\
\#V(F)(L) & \text{if the singular point is a cusp over } L.
\end{cases}
\]
\end{lem}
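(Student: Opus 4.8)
The strategy is to transfer the count from $C$ to its image, using the desingularization morphism $\rho\colon C\to V(F)\subset\mathbb{P}^2$. Recall from the discussion preceding the lemma that $V(F)$ has a unique singular point $P$, that $P$ is $K$-rational, and that $P$ is either a node or a cusp; hence $\rho$ restricts to an isomorphism over the smooth locus $V(F)\setminus\{P\}$. Consequently, for every finite field $L\supseteq K$,
\[
\#C(L)=\bigl(\#V(F)(L)-1\bigr)+\#\rho^{-1}(P)(L),
\]
the term $-1$ deleting the $L$-rational point $P$ and the last term recording the $L$-rational points of $C$ above it. It thus remains to show that $\#\rho^{-1}(P)(L)$ equals $2$, $0$, or $1$ according as $P$ is a split node, a non-split node, or a cusp over $L$.

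For this I would describe $\rho$ explicitly near $P$, in the spirit of the proof of Lemma~\ref{lem:hyp_rat}: dehomogenize $F$ so that $P$ is the origin of $\mathbb{A}^2$ and resolve the double point by a single blow-up. If $P$ is a cusp, the curve has one analytic branch at $P$, so $\rho^{-1}(P)$ is a single point; being the whole fibre, which is stable under $\mathrm{Gal}(\overline{L}/L)$, it is $L$-rational, and $\#\rho^{-1}(P)(L)=1$, giving $\#C(L)=\#V(F)(L)$. If $P$ is a node, the curve has two analytic branches at $P$, corresponding bijectively and $\mathrm{Gal}(\overline{L}/L)$-equivariantly to the two lines of the tangent cone, and the blow-up separates them into two points $P_1,P_2\in C$ with $\rho(P_1)=\rho(P_2)=P$. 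When the node is split over $L$, both tangent lines, hence both $P_i$, are $L$-rational, so $\#\rho^{-1}(P)(L)=2$ and $\#C(L)=\#V(F)(L)+1$; when it is non-split over $L$, the two tangent lines are conjugate over $L$, the action of $\mathrm{Gal}(\overline{L}/L)$ swaps $P_1$ and $P_2$, so $\#\rho^{-1}(P)(L)=0$ and $\#C(L)=\#V(F)(L)-1$. This is precisely the stated formula.

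The local computation behind this is routine: it is the standard normalization of a planar node and of an ordinary cusp, and it is entirely analogous to the explicit resolution carried out in Lemma~\ref{lem:hyp_rat}. The one point needing a little care is that the trichotomy must be read over $L$, not over $K$: a node that is non-split over $K$ becomes split over a suitable quadratic extension of $K$, and the lemma is stated consistently with this provided ``split'' and ``non-split'' are interpreted relative to $L$. I expect the only (mild) obstacle to be bookkeeping the $\mathrm{Gal}(\overline{L}/L)$-equivariance of the correspondence between the two branches at a node and the two lines of its tangent cone; the rest is a direct unwinding of the definition of the desingularization morphism.
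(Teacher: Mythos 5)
Your argument is correct and is exactly the reasoning the paper leaves implicit ("looking at the desingularization of the singular point"): the map is an isomorphism away from the $K$-rational singular point $P$, and the fibre over $P$ contributes $2$, $0$, or $1$ $L$-points according as the node is split, non-split, or a cusp over $L$. Your added details (Galois-equivariance of the branch/tangent-line correspondence, rationality of the unique point over a cusp since $L$ is perfect) are accurate fill-ins of what the paper omits.
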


\section{Reduction of defining equations}\label{sec:red}

This section gives reductions of defining equations for hyperelliptic curves and genus-$5$ trigonal curves.
The reductions provided in this section can reduce the dimension
of our search space as much as possible.

\subsection{Reduction for hyperelliptic curves}\label{subsec:ReductionHyper}

The result in this subsection holds for arbitrary genus $g \ge 1$ and arbitrary characteristic $\ne 2$.
Let $K$ be a perfect field of odd characteristic $p > 2$.
In the below lemma (Lemma \ref{ReductionHyperelliptic}), we give a reduction of a defining equation for a hyperelliptic curve over $K$.
Note that the case (1) was already proved in \cite[Lemma 2]{KH18}, and the proof below is the same as that of \cite[Lemma 2]{KH18}.
By a slight modification of the proof of \cite[Lemma 2]{KH18}, we also prove a similar argument for the case (2):


\begin{lem}\label{ReductionHyperelliptic}
Let $\epsilon \in K^\times \smallsetminus (K^\times)^2$.
Any hyperelliptic curve $H$ of genus $g$ over $K$ is the desingularization of the homogenization of the following affine model:
\begin{enumerate}
\item[\rm (1)] If $p$ and $2g+2$ are coprime,
\begin{equation}\label{eq:eq:genus5Hyp1}
c y^2 = x^{2g+2} + b x^{2g} + a_{2g-1}x^{2g-1} + \cdots + a_1 x + a_0
\end{equation}
with $b \in \{0, 1 , \epsilon\}$, $c\in \{1,\epsilon\}$ and $a_i \in K$ for $0 \leq i \leq 2g-1$.
\item[\rm (2)] If $p$ divides $2g+2$,
\begin{equation}\label{eq:genus5Hyp}
c y^2 = x^{2g+2} + b_1 x^{2g+1} + b_2 x^{2g} + a_{2g-1}x^{2g-1} + \cdots + a_1 x + a_0
\end{equation}
with $(b_1,b_2) \in \{(1,0), (0,0), (0,1), (0,\epsilon)\}$,
$c \in \{1,\epsilon\}$ and $a_i \in K$ for $0 \leq i \leq 2g-1$.
\end{enumerate}
\end{lem}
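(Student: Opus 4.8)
The plan is to start from the general hyperelliptic model $c y^2 = f(x)$ with $f$ monic of degree $2g+2$ (which is legitimate by the discussion preceding Lemma~\ref{lem:hyp_rat}), and to exhaust the degrees of freedom coming from the group of admissible coordinate changes. Over a perfect field $K$ of odd characteristic, two such models define $K$-isomorphic hyperelliptic curves precisely when one is obtained from the other by a substitution $x \mapsto (\alpha x + \beta)/(\gamma x + \delta)$ with $\begin{pmatrix}\alpha & \beta\\ \gamma & \delta\end{pmatrix}\in \mathrm{GL}_2(K)$ together with a rescaling $y \mapsto \lambda y / (\gamma x + \delta)^{g+1}$ and a scaling of the equation by $K^\times$; after clearing denominators this is the standard action of $\mathrm{GL}_2(K)\times K^\times$ on binary forms of degree $2g+2$, with the extra feature that the class of $c$ in $K^\times/(K^\times)^2$ can only change by the square of the content-type factor, so $c$ survives as an element of $K^\times/(K^\times)^2$, whence $c\in\{1,\epsilon\}$ after normalization. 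Because we have written $f$ as monic (equivalently, because we want to keep $f$ monic of degree exactly $2g+2$), we restrict to the affine substitutions $x\mapsto \alpha x + \beta$, and the remaining task is to use the two parameters $\alpha\in K^\times$ and $\beta\in K$ to normalize the top coefficients of $f$.

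First I would use the translation $x \mapsto x - \tfrac{b_1}{(2g+2)}$, which in case~(1) is legitimate since $p\nmid 2g+2$, to kill the coefficient of $x^{2g+1}$; this is exactly the completing-the-power-sum step, and it is where the two cases genuinely diverge, since in case~(2) the scalar $2g+2$ vanishes in $K$ and no translation can remove $x^{2g+1}$. Next, in case~(1), I would use the scaling $x\mapsto \alpha x$ (and compensate in $y$ and the overall constant) to normalize the coefficient $b$ of $x^{2g}$: scaling by $\alpha$ multiplies $b$ by $\alpha^{-2}$, so $b$ is well-defined in $K^\times/(K^\times)^2\cup\{0\}$, giving the three representatives $\{0,1,\epsilon\}$ once we recall that $\alpha$ also rescales the equation and hence may be absorbed so as not to disturb monicity (the $\alpha^{2g+2}$ factor produced on the right is divided back out, which is where $c$ picks up at worst a square and thus stays in $\{1,\epsilon\}$). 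For case~(2), the stabilizer of "monic of degree $2g+2$, no admissible translation available" is essentially just the scalings $x\mapsto\alpha x$; one checks that simultaneously one cannot in general kill $b_1$, but the pair $(b_1,b_2)$ can be brought into one of the four listed normal forms by the following case split: if $b_1\ne 0$, rescale $x$ to make $b_1=1$, and then a further translation $x\mapsto x+t$ changes $b_2$ by a multiple of $b_1=1$ times $t$ (the $x^{2g}$-coefficient shifts linearly in $t$ because the $x^{2g+1}$-term contributes), so $b_2$ can be set to $0$, yielding $(1,0)$; if $b_1=0$, then $b_2$ transforms by $\alpha^{-2}$ under $x\mapsto\alpha x$ and is untouched by translation (since there is no $x^{2g+1}$ term to feed into it), so $b_2\in K^\times/(K^\times)^2\cup\{0\}$ gives exactly $(0,0),(0,1),(0,\epsilon)$.

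The remaining coefficients $a_{2g-1},\dots,a_0$ are then unconstrained elements of $K$, since all of $\mathrm{GL}_2(K)\times K^\times$ has been spent (two parameters from the affine transformations, one from the scaling of the equation, all pinned down by the normalization of the leading data $c,b$ or $c,b_1,b_2$ together with monicity). I would organize the write-up as: (i) recall the isomorphism criterion and reduce to affine substitutions preserving monicity; (ii) carry out the translation step, noting where $p\mid 2g+2$ obstructs it; (iii) carry out the scaling step and track the induced change on $c$; (iv) in case~(2) do the small extra case analysis on $(b_1,b_2)$ described above. Since case~(1) is quoted as being identical to \cite[Lemma 2]{KH18}, the only genuinely new content is step~(iv), and the main obstacle — really the only subtle point — is making sure in case~(2) that after normalizing $(b_1,b_2)$ there is no residual freedom that would further collapse the list, i.e.\ verifying that the four pairs are pairwise inequivalent and that nothing forces $c$ out of $\{1,\epsilon\}$; this is a finite, elementary check on how the leading three coefficients transform, but it must be done carefully because the interaction between translations and scalings is exactly what differs from case~(1).
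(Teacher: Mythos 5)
Your proposal is correct and follows essentially the same route as the paper's proof: write the curve as $c y^2 = h(x)$ with $h$ monic, normalize $c$ modulo squares by rescaling $y$, and then spend the affine substitutions $x \mapsto \gamma x + \beta$ exactly as the paper does — in case (1) translate to kill the $x^{2g+1}$-coefficient and scale to put the $x^{2g}$-coefficient in $\{0,1,\epsilon\}$, and in case (2) use that $\binom{2g+2}{1}\equiv\binom{2g+2}{2}\equiv 0 \pmod p$ (the latter because $p$ is odd) so that translation fixes $b_1$ and shifts $b_2$ linearly with slope proportional to $b_1$, giving the same sub-case split $b_1\neq 0 \Rightarrow (1,0)$ and $b_1=0 \Rightarrow b_2\in\{0,1,\epsilon\}$. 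The only superfluous element is your closing concern about the four pairs being pairwise inequivalent: the lemma asserts only the existence of such a model, not uniqueness of the normal form, so no such check is required.
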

\begin{proof}
A hyperelliptic curve $H$ over $K$ is realized as $y^2 = f(x)$ for a polynomial $f(x)$ of degree $2g+2$ over $K$.
This can be expressed as $c y^2 = h_0(x)$ for $c\in K^\times$ and for a monic polynomial $h_0(x)$ of degree $2g+2$ over $K$.
Considering the transformation $(x,y) \mapsto (x,\alpha y)$ for some $\alpha\in K^\times$, one may assume $c=1$ or $\epsilon$.
We write $h_0(x) = x^{2g+2} + a_{2g+1}x^{2g+1} + \cdots + a_0$ with $a_i \in K$ for $0 \leq i \leq 2g+1$.
Considering $x \to x + \beta$, we can transform $h_0(x)$ to a polynomial
\begin{eqnarray}
h_1(x) &= & \sum_{j=0}^{2g+2} \binom{2g+2}{j} \beta^{j} x^{2g+2-j} + \sum_{i=0}^{2g+1} a_{i} \sum_{j=0}^{i} \binom{i}{j} \beta^{j} x^{i-j}  \nonumber \\
& = & x^{2g+2} + \left( \binom{2g+2}{1} \beta + a_{2g+1} \right) x^{2g+1} + \left( \binom{2 g + 2}{2} {\beta}^2 + a_{2g+1} \binom{2g+1}{1} \beta + a_{2g} \right) x^{2g} \nonumber \\
& & + \; (\mbox{lower terms in $x$}). \nonumber
\end{eqnarray}
The transformation $(x,y) \mapsto (\gamma x, \gamma^{g+1}y)$ for some $\gamma \in K^\times$ and the multiplication by $\gamma^{-(2g+2)}$ to the whole of $c y^2 = h_1(x)$, we may assume $H$ is defined by an equation of the form
\begin{equation}
\begin{split}
c y^2 &= x^{2g+2} + \left( \binom{2g+2}{1} \beta + a_{2g+1} \right) \gamma^{-1} x^{2g+1} \\
& \quad + \left( \binom{2 g + 2}{2} {\beta}^2 + a_{2g+1} \binom{2g+1}{1} \beta + a_{2g} \right) \gamma^{-2} x^{2g} + \; (\mbox{lower terms in $x$}). \label{eq:hyp2}
\end{split}
\end{equation}

(1) Assume that $p$ and $2g+2$ are coprime.
In this case, we may assume that the $x^{2g+1}$-coefficient in \eqref{eq:hyp2} is zero, by putting $\beta := - a_{2g+1}/(2g+2)$.
For each value of $\beta$, there exists an element $\gamma \in K^{\times}$ such that the $x^{2g}$-coefficient in \eqref{eq:hyp2} becomes either of $0$, $1$ and $\epsilon$.

(2) Assume that $p$ divides $2g+2$.
Since $\binom{2g+2}{1} \equiv 0 \bmod{p}$ and $\binom{2g+2}{2} \equiv 0 \bmod{p}$, we have
\begin{equation}
c y^2 = x^{2g+2} + a_{2g+1} \gamma^{-1} x^{2g+1} + \left( a_{2g+1} \beta + a_{2g} \right) \gamma^{-2} x^{2g} + (\mbox{lower terms in $x$}). \label{eq:hyp1}
\end{equation}
If $a_{2g+1} \neq 0$, we may assume that the coefficient of $x^{2g+1}$ (resp.\ $x^{2g}$) in \eqref{eq:hyp1} is $1$ (resp.\ $0$), by putting $\gamma := a_{2g+1}$ and $\beta = - a_{2g} / a_{2g+1}$.
Otherwise we may assume that the $x^{2g}$-coefficient in \eqref{eq:hyp1} is either of $0$, $1$ and $\epsilon$.
\end{proof}

\subsection{Reduction for trigonal curves of genus five in characteristic three}\label{subsec:ReductionTrigonal}

Let $K$ be a finite field of characteristic $p=3$.
Throughout this section, let $q$ be the cardinality of $K$, $\zeta$ a primitive element of $K^\times$, and $\epsilon$ an element of $K^\times \smallsetminus (K^\times)^2$.
As quintic models of trigonal curves of genus $5$ over $K$, we have the following three types (cf.\ \cite[Subsection 2.1]{trigonal}):
\begin{enumerate}
\item[] {\bf (Split node case)} $C'=V(F)$ for some $F=xyz^3+f$;
\item[] {\bf (Non-split node case)} $C'=V(F)$ for some $F=(x^2-\epsilon y^2)z^3+f$;
\item[] {\bf (Cusp case)} $C'=V(F)$ for some $F=x^2z^3+f$,
\end{enumerate}
where $f$ is the sum of monomial terms which can not be divided by $z^3$.
Note that in the cusp case, the $y^3z^2$-coefficient of $f$ has to be non-zero.

In the following, we shall give reduced forms of $F$ in each of the above three cases. In \cite[Section 3]{trigonal} we considered similar reductions
but those do not work for $p = 3$.


\begin{prop}[Split node case]\label{ReductionSplitNode}
Any genus-five trigonal curve over $K$ of split node type
has a quintic model in $\mathbb{P}^2$ of the form
\begin{equation}\label{eq:SplitNodeQuinticGeneral}
\begin{split}
&xyz^3 + (a_1 x^3 + a_2 x^2 y + a_3 x y^2 + a_4 y^3) z^2
+ (a_5 x^4 + a_6 x^3 y + a_7 x^2 y^2 + a_8 x y^3 + a_9 y^4) z\\
&\quad + a_{10} x^5 + a_{11} x^4 y + a_{12} x^3 y^2 + a_{13} x^2 y^3 + a_{14} x y^4 + a_{15} y^5
\end{split}
\end{equation}
for $a_i\in K$, where either of the following {\rm (1)} -- {\rm (5)} hold.
\begin{enumerate}
\item[\rm (1)] $a_1=1$, $a_2\in\{0,1\}$ and $a_5 = a_6 =0$;
\item[\rm (2)] $a_1=0$, $a_2=1$, $a_3\in\{0,1\}$ and $a_6=a_7=0$;
\item[\rm (3)] $a_1=a_2=0$, $a_3=1$, $a_4\in\{0,1\}$ and $a_7=a_8=0$;
\item[\rm (4)] $a_1=a_2=a_3=0$, $a_4=1$ and $a_8=a_9=0$;
\item[\rm (5)] $a_1=a_2=a_3=a_4=0$, $a_6 \in \{0,1,\zeta\}$ and $a_7\in\{0,1\}$.\end{enumerate}
\end{prop}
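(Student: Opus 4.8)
The plan is to work with the group $G$ of linear coordinate changes of $\mathbb{P}^2$ carrying a quintic of the shape \eqref{eq:SplitNodeQuinticGeneral} to another quintic of the same shape, and to show that $G$ already brings an arbitrary split-node model into one of (1)--(5). A quintic of the form \eqref{eq:SplitNodeQuinticGeneral} is exactly one whose only singular point is $(0:0:1)$ and whose (split) tangent cone there is the normalised form $xy$; since $(0:0:1)$ is the only common base point of the linear system \eqref{eq:SplitNodeQuinticGeneral} and the $z^3$-part of any member must remain a nonzero scalar multiple of $xy$, one checks that $G$ is generated by (i) the diagonal scalings $(x,y,z)\mapsto(\lambda x,\mu y,\nu z)$, followed by the rescaling of $F$ restoring the coefficient $1$ of $xyz^3$; (ii) the swap $(x,y,z)\mapsto(y,x,z)$; and (iii) the shears $(x,y,z)\mapsto(x,y,z+\alpha x+\beta y)$. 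The feature special to characteristic $3$ --- and the reason the reductions of \cite[Section~3]{trigonal} have to be redone here --- is the identity $xy(z+\alpha x+\beta y)^3 = xyz^3 + xy(\alpha x+\beta y)^3$: a shear stays within \eqref{eq:SplitNodeQuinticGeneral}, it leaves the $z^2$-coefficient $Q=a_1x^3+a_2x^2y+a_3xy^2+a_4y^3$ unchanged, and it replaces the $z^1$-coefficient $R=a_5x^4+a_6x^3y+a_7x^2y^2+a_8xy^3+a_9y^4$ by $R+2(\alpha x+\beta y)Q$. I would also record the scaling action on the relevant coefficients: $a_1\mapsto a_1\lambda^2\mu^{-1}\nu^{-1}$, $a_2\mapsto a_2\lambda\nu^{-1}$, $a_3\mapsto a_3\mu\nu^{-1}$, $a_4\mapsto a_4\lambda^{-1}\mu^2\nu^{-1}$, $a_6\mapsto a_6(\lambda\nu^{-1})^2$, $a_7\mapsto a_7(\lambda\nu^{-1})(\mu\nu^{-1})$.

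Next I would split according to which of $a_1,a_2,a_3,a_4$ is the first nonzero coefficient of $Q$, case (5) being $Q=0$. In case (1), $a_1\ne 0$: choosing $\nu$ suitably makes $a_1=1$ while still leaving $\lambda,\mu$ free, and since $a_2$ is then multiplied by the arbitrary factor $\mu\lambda^{-1}$ we may take $a_2\in\{0,1\}$, so $Q\in\{x^3,\,x^3+x^2y\}$; now one solves for $(\alpha,\beta)$ so that the shear makes $a_5=a_6=0$ --- possible because the $x^4$- and $x^3y$-coefficients of $2(\alpha x+\beta y)Q$ depend on $(\alpha,\beta)$ through an invertible $2\times 2$ matrix --- which is exactly (1). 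Cases (2)--(4) have the same structure: scale the leading coefficient of $Q$ to $1$, scale the next nonzero one into $\{0,1\}$, then solve the invertible $2\times 2$ system coming from the shear to annihilate the pair of $R$-coefficients named in (2), (3), (4). In case (5), $Q=0$, so a shear no longer moves $R$; instead one uses the scalings on $R$ directly: $a_6$ is multiplied by the square $(\lambda\nu^{-1})^2$, so it may be placed in $\{0,1,\zeta\}$ ($0$ together with representatives $1,\zeta$ of the two square classes of $K$), after which $\mu\nu^{-1}$ is still free, so $a_7$, which is multiplied by $(\lambda\nu^{-1})(\mu\nu^{-1})$, may be placed in $\{0,1\}$; this is (5). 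In every case the coefficients not mentioned are left arbitrary in $K$.

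The only step that needs genuine care is the identification of $G$: that once the tangent cone has been normalised to $xy$ no coordinate change can move $z$ into $\langle x,y\rangle$ or mix $x$ and $y$ nontrivially, so $G$ really is generated by the scalings, the swap and the shears; combined with this is the elementary but essential characteristic-$3$ cube identity that keeps the shears inside the family \eqref{eq:SplitNodeQuinticGeneral}. Once $G$ and its action on $a_1,\dots,a_9$ are in hand, the five cases come down to normalising a leading coefficient, reading off a square class, and inverting a couple of $2\times 2$ matrices.
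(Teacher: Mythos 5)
Your proposal is correct and follows essentially the same route as the paper: a case split on the first nonzero coefficient of the $z^2$-part, shears $z\mapsto z+\alpha x+\beta y$ (which in characteristic $3$ leave the $z^2$-part untouched and replace $R$ by $R+2(\alpha x+\beta y)Q$, exactly the paper's formula for $f_1$) to kill the two indicated $z$-coefficients via a triangular invertible system, and diagonal rescalings to normalise $a_1,\dots,a_4$ and, in case (5), the square class of $a_6$ and then $a_7$. The only difference is cosmetic: you normalise by scaling before shearing and spend effort identifying the full group $G$ of shape-preserving coordinate changes, which is more than the statement requires (one only needs that the transformations used stay within the split-node quintic models), while the paper simply exhibits the transformations in the opposite order.
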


\begin{proof}
Note that \eqref{eq:SplitNodeQuinticGeneral} is the general form of the quintic
in the split node case.
Considering $z \to z + \alpha x + \beta y$, we can transform the quintic \eqref{eq:SplitNodeQuinticGeneral} to a quintic of the form:
\begin{equation}
\begin{split}
F &= xyz^3 + (a_1 x^3 + a_2 x^2 y + a_3 x y^2 + a_4 y^3) z^2 
+ f_1 z + f_0,
\end{split}
\end{equation}
where
\begin{equation}\label{eq:f1}
\begin{split}
f_1 &= (2 \alpha a_1 + a_5) x^4 + (2 \beta a_1 + 2 \alpha a_2 + a_6) x^3 y 
+ (2 \beta a_2 + 2 \alpha a_3 + a_7) x^2 y^2 \\
& \quad + (2 \beta a_3 + 2 \alpha a_4 + a_8) x y^3 + (2 \beta a_4 + a_9) y^4
\end{split}
\end{equation}
and $f_0$ is a quintic form in $x$ and $y$.

(1) ($a_1 \neq 0$) Putting $\alpha := - a_5 / (2 a_1)$ and $\beta := - (2 \alpha a_2 + a_6)/ (2 a_1)$, we may assume $a_5 = a_6 = 0$ in \eqref{eq:SplitNodeQuinticGeneral}.
Considering $(x,y) \to (\gamma x,\delta y)$ and the multiplication by $(\gamma \delta)^{-1}$, the coefficients $a_1$ and $a_2$ are transformed into $\gamma^2 \delta^{-1} a_1$ and $\gamma a_2$ respectively. 
Putting $\gamma := a_2^{-1}$ (resp.\ $\gamma := 1$) if $a_2 \neq 0$ (resp.\ $a_2 = 0$) and $\delta := \gamma^2 a_1$, we may assume that the coefficients of $x^3 z^2$ and $x^2 y z^2$ are $1$ and $0,1$ respectively.

(2) ($a_1 = 0$ and $a_2 \neq 0$) Putting $\alpha := - a_6 / (2 a_2)$ and $\beta := - (2 \alpha a_3 + a_7)/ (2 a_2)$, we may assume $a_6 = a_7 = 0$ in \eqref{eq:SplitNodeQuinticGeneral}.
Considering $(x,y) \to (\gamma x,\delta y)$ and the multiplication by $(\gamma \delta)^{-1}$, the coefficients $a_2$ and $a_3$ are transformed into $\gamma a_2$ and $\delta a_3$ respectively.
Putting $\gamma := a_2^{-1}$ and $\delta := a_3^{-1}$ (resp.\ $\delta := 1$) if $a_3 \neq 0$ (resp.\ $a_3 = 0$), we may assume that the coefficients of $x^2 y z^2$ and $x y^2 z^2$ are $1$ and $0,1$ respectively.

(3) ($a_1 = a_2 = 0$ and $a_3 \neq 0$) Putting $\alpha := - a_7 / (2 a_3)$ and $\beta := - (2 \alpha a_4 + a_8)/ (2 a_3)$, we may assume $a_7 = a_8 = 0$ in \eqref{eq:SplitNodeQuinticGeneral}.
Considering $(x,y) \to (\gamma x,\delta y)$ and the multiplication by $(\gamma \delta)^{-1}$, the coefficients $a_3$ and $a_4$ are transformed into $\delta a_3$ and $\gamma^{-1}\delta^2 a_4$ respectively.
Putting $\delta := a_3^{-1}$ and $\gamma := \delta^2 a_4$ (resp.\ $\gamma := 1$) if $a_4 \neq 0$ (resp.\ $a_4 = 0$), we may assume that the coefficients of $x y^2 z^2$ and $y^3 z^2$ are $1$ and $0,1$ respectively.

(4) ($a_1 = a_2 = a_3 = 0$ and $a_4 \neq 0$) Putting $\alpha := - a_8 / (2 a_4)$ and $\beta := - a_9/ (2 a_4)$, we may assume $a_8 = a_9 = 0$ in \eqref{eq:SplitNodeQuinticGeneral}.
Considering $(x,y) \to (\gamma x,\delta y)$ and the multiplication by $(\gamma \delta)^{-1}$, the coefficient $a_4$ is transformed into $\gamma^{-1}\delta^2 a_4$.
Putting $\gamma := a_4$ and $\delta := 1$, we may assume that the coefficient of $y^3 z^2$ is $1$.

(5) ($a_1 = a_2 = a_3 = a_4 = 0$) Considering $(x,y) \to (\gamma x,\delta y)$ and the multiplication by $(\gamma \delta)^{-1}$, the coefficients $a_6$ and $a_7$ are transformed into $\gamma^2 a_6$ and $\gamma \delta a_7$ respectively.
If $a_6 = 0$, put $\gamma := 1$ and $\delta := a_7^{-1}$ if $a_7 \neq 0$.
If $a_6 \neq 0$, write $a_6 = \zeta^{2 i + j}$ for $i \in \{0, \ldots , (q-3)/2 \}$ and $j \in \{0, 1 \}$, and put $\gamma := \zeta^{-i}$ and $\delta := (\gamma a_7)^{-1}$ if $a_7 \neq 0$.
%
%
%
\end{proof}



\begin{prop}[Non-split node case]\label{ReductionNonSplitNode}
Any genus-five trigonal curve over $K$ of non-split node type
has a quintic model in $\mathbb{P}^2$ of the form
\begin{enumerate}
\item[\rm (1)] 
for $a_i\in K$,
\begin{equation}\label{NonSplitNodeReducedEq1}
\begin{split}
F &= (x^2-\epsilon y^2)z^3 + 
\left\{a_1 x(x^2-\epsilon y^2) +  
a_2x^3 + y^3\right\} z^2 + (a_3 x^4 + a_4 x^3 y + a_5 x^2 y^2) z \\
&\quad + a_6 x^5 + a_7 x^4 y + a_8 x^3 y^2 + a_9 x^2 y^3 + a_{10} x y^4 + a_{11} y^5.
\end{split}
\end{equation}
\item[\rm (2)]for $a_i\in K$,
\begin{equation}\label{NonSplitNodeReducedEq2}
\begin{split}
F &= (x^2-\epsilon y^2)z^3 + 
\left\{x(x^2-\epsilon y^2) +  
a_1x^3 \right\} z^2 + (a_2 x^4 + a_3 x^3 y + a_4 y^4) z \\
&\quad + a_5 x^5 + a_6 x^4 y + a_7 x^3 y^2 + a_8 x^2 y^3 + a_{9} x y^4 + a_{10} y^5.
\end{split}
\end{equation}
\item[\rm (3)]
for $a_i\in K$,
\begin{equation}\label{NonSplitNodeReducedEq3}
\begin{split}
F &= (x^2-\epsilon y^2)z^3 + x^3 z^2 + (a_1 x^2 y^2 + a_2 x y^3 + a_3 y^4) z \\
&\quad + a_4 x^5 + a_5 x^4 y + a_6 x^3 y^2 + a_7 x^2 y^3 + a_8 x y^4 + a_9 y^5.
\end{split}
\end{equation}
\item[\rm (4)] for $a_i\in K$,
\begin{equation}\label{NonSplitNodeReducedEq4}
\begin{split}
F &= (x^2-\epsilon y^2)z^3  + (a_1 x^4 + a_2 x^3 y + a_3 x^2 y^2 + a_4 x y^3 + a_5 y^4) z \\
&\quad + a_6 x^5 + a_7 x^4 y + a_8 x^3 y^2 + a_9 x^2 y^3 + a_{10} x y^4 + a_{11} y^5,
\end{split}
\end{equation}
where $(a_6,\ldots,a_{11})$ is either of $(0,\ldots,0,1,a_{i+1},\ldots, a_{11})$
for $i=6,\ldots,11$.
\end{enumerate}
\end{prop}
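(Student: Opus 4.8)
The plan is to start from the general non-split-node quintic $F=(x^2-\epsilon y^2)z^3+q_2(x,y)z^2+q_3(x,y)z+q_4(x,y)$, where $q_j$ denotes a binary form of degree $j$ in $x,y$, and to follow how the group preserving this shape acts on the triple $(q_2,q_3,q_4)$. That group consists of the similitudes of the anisotropic form $\ell:=x^2-\epsilon y^2$ acting on $(x,y)$, the $z$-translations $z\mapsto z+ex+fy$, the $z$-scalings $z\mapsto\nu z$, and the overall rescalings of $F$; indeed it is exactly the stabiliser in $\mathrm{PGL}_3(K)$ of the unique singular point $(0:0:1)$, and an isomorphism between two such curves must fix that point and carry $\ell$ to a scalar multiple of an anisotropic binary form, hence (after a preliminary $\mathrm{GL}_2$-change in $(x,y)$) to a scalar multiple of $\ell$ itself. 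The observation that makes the characteristic-three case tractable is the identity $(z+ex+fy)^3=z^3+(ex+fy)^3$: it shows a $z$-translation leaves $q_2$ unchanged and only replaces $q_3$ by $q_3+2(ex+fy)q_2$ (while altering $q_4$). So one must reduce $q_2$ first, using only similitudes of $\ell$ and rescaling, after which the $z$-translations are free to clean up $q_3$.

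\emph{Reduction of $q_2$.} Put $L=K(\sqrt\epsilon)$ and $w=x+y\sqrt\epsilon$. Every $K$-rational binary cubic is uniquely $q_2=sw^3+tw^2\bar w+\bar t\,w\bar w^2+\bar s\,\bar w^3$ with $s,t\in L$; a rotation $w\mapsto uw$ ($u\in L^\times$) acts by $(s,t)\mapsto(u^3s,\,u^2\bar u\,t)$, a reflection $w\mapsto u\bar w$ by $(s,t)\mapsto(\bar u^3\bar s,\,u\bar u^2\bar t)$, and a rescaling by $(s,t)\mapsto(\mu s,\mu t)$, $\mu\in K^\times$. If $t\neq0$, rotating by $u=t^{-1}$ makes $t=N(t)^{-1}\in K^\times$, and passing back to $x,y$-coordinates, writing $s=s_0+s_1\sqrt\epsilon$ with $s_0,s_1\in K$, yields
\[
q_2=2t\,x(x^2-\epsilon y^2)+2s_0\,x^3+2\epsilon^2 s_1\,y^3,
\]
which lies in $\langle x(x^2-\epsilon y^2),\,x^3,\,y^3\rangle$ with nonzero coefficient $2t$ on $x(x^2-\epsilon y^2)$. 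If $s_1\neq0$, a rescaling normalises the $y^3$-coefficient to $1$, giving the shape of case (1); if $s_1=0$, a rescaling normalises the $x(x^2-\epsilon y^2)$-coefficient to $1$, giving case (2). If instead $t=0$, then $q_2=sw^3+\bar s\bar w^3$ equals $(\alpha x+\beta y)^3$ for suitable $\alpha,\beta\in K$ (a characteristic-three phenomenon: the cross terms of the cube vanish and cube roots exist in $K$); since $u\mapsto u^3$ is a bijection of $L^\times$ (as $3\nmid q^2-1$), a rotation brings $s$ into $K^\times$, so $q_2\propto x^3$ and a rescaling gives $q_2=x^3$ (case (3)). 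Finally $s=0$ gives $q_2=0$ (case (4)). Thus every $q_2$ falls into one of the four cases.

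\emph{Reduction of $q_3$, and of $q_4$ when $q_2=0$.} In cases (1)--(3) the cubic $q_2$ is nonzero, so $xq_2$ and $yq_2$ are linearly independent quartics and, by the identity above, a $z$-translation adds an arbitrary element of $\langle xq_2,\,yq_2\rangle$ to $q_3$ without disturbing $q_2$. A direct computation shows the relevant $2\times2$ coefficient matrix is invertible (its determinant is $1$, $\epsilon^2$, $1$ in the three cases), so one kills the $xy^3$- and $y^4$-coefficients of $q_3$ in case (1), the $x^2y^2$- and $xy^3$-coefficients in case (2), and the $x^4$- and $x^3y$-coefficients in case (3), producing exactly the stated reduced $q_3$; at this point the normalised $q_2$ forces $\nu=1$ in any remaining rescaling, so no freedom survives and $q_4$ is left arbitrary. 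In case (4) the condition $q_2=0$ is stable under every admissible transformation and $z$-translations no longer touch $q_3$, so $q_3$ stays general; if $q_4=0$ then $F=z\bigl((x^2-\epsilon y^2)z^2+q_3\bigr)$ is reducible, which is impossible, so $q_4\neq0$, and a $z$-scaling combined with a rescaling of $F$ (multiplying $q_3$ by $\lambda^2$ and $q_4$ by $\lambda^3$, $\lambda\in K^\times$; since $3\nmid q-1$ one may choose $\lambda$ with $\lambda^3$ equal to the inverse of the leading nonzero coefficient of $q_4$) normalises that coefficient to $1$.

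The main obstacle is the reduction of $q_2$: one must correctly enumerate the orbits of binary cubic forms under the similitude group of $x^2-\epsilon y^2$ up to scaling \emph{in characteristic three} and verify that the four resulting shapes are exhaustive. The characteristic-three features — that $t=0$ is equivalent to $q_2$ being a perfect cube, and that the $z^3$-term contributes nothing to the $z^2$-coefficient under a $z$-translation — are precisely what force the clean four-way split, but they also mean the characteristic-$\neq3$ reduction of \cite{trigonal} cannot be reused; the bookkeeping is cleanest once the rotation subgroup is identified with $L^\times/K^\times$ and the top coefficients of a cubic with the pair $(s,t)\in L^2$ as above.
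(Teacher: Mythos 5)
Your argument is correct, and its core is the same as the paper's: normalize the cubic $z^2$-coefficient under the similitude group of $x^2-\epsilon y^2$ together with scalings, and then use the characteristic-three identity $(z+ex+fy)^3=z^3+(ex+fy)^3$, which guarantees that a $z$-translation leaves that cubic untouched while adding $2(ex+fy)q_2$ to the $z$-coefficient, so two of its quartic coefficients can be cleared. The differences are mainly organizational. The paper invokes the decomposition of binary cubics into $\langle x(x^2-\epsilon y^2),\,y(x^2-\epsilon y^2)\rangle\oplus\langle x^3,\,y^3\rangle$ from \cite[Lemma 4.1.1]{KH16}, uses the torus only to kill the $y(x^2-\epsilon y^2)$-component, and then splits according to which of $b_2$, $c_1$, $b_1$ is the first nonzero coefficient; you rederive the same decomposition directly via $w=x+y\sqrt{\epsilon}$ and the pair $(s,t)$, and split instead on whether the $V_1$-component $t$ vanishes. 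As a result your orbit bookkeeping routes some curves differently: a curve with $q_2=b_1x^3+b_2y^3$, $b_2\neq 0$, which the paper places in case \eqref{NonSplitNodeReducedEq1} with $a_1=0$, is sent by you to case \eqref{NonSplitNodeReducedEq3} using that cubing is a bijection on $L^\times$ (since $3\nmid q^2-1$); both are legitimate, as the proposition only asserts that one of the four shapes is attainable. Your write-up also makes explicit two points the paper leaves implicit: the invertibility of the $2\times 2$ systems used to clear the chosen quartic coefficients (your determinants $1$, $\epsilon^2$, $1$ check out), and case \eqref{NonSplitNodeReducedEq4}, where irreducibility of the quintic model forces the degree-five part to be nonzero and bijectivity of cubing on $K^\times$ normalizes its first nonzero coefficient to $1$ — the paper's proof stops after case (3). (The phrase ``binary form of degree $j$'' for $q_j$ is a harmless slip; your computations consistently treat $q_2,q_3,q_4$ as the cubic, quartic and quintic parts.)
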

\begin{proof}
Let $V$ be the $K$-vector space consisting of cubic forms in $x,y$ over $K$.
As seen in \cite[Lemma 4.1.1]{KH16},
the representation $V$ of $\tilde{\rm C}$
defined by $\gamma x = rx+\epsilon s y$ and $\gamma y = s x +r y$
for
\[
\gamma = \begin{pmatrix}r & \epsilon s\\ s & r\end{pmatrix}\in\tilde{\rm C}
\]
is decomposed as $V_1\oplus V_2$, where
$V_1 = \langle x(x^2-\epsilon y^2), y(x^2-\epsilon y^2)\rangle$ and
$V_2 = \langle x(x^2+3\epsilon y^2), y(3x^2 + \epsilon y^2)\rangle = \langle x^3, y^3\rangle$.
We write
\begin{equation}
\begin{split}
F &= (x^2-\epsilon y^2)z^3 + 
\left\{c_1 x(x^2-\epsilon y^2) + c_2 y(x^2-\epsilon y^2) +  
b_1x^3 + b_2 y^3\right\} z^2 \\
&\quad + (a_1 x^4 + a_2 x^3 y + a_3 x^2 y^2 + a_4 x y^3 + a_5 y^4) z \\
&\quad + a_6 x^5 + a_7 x^4 y + a_8 x^3 y^2 + a_9 x^2 y^3 + a_{10} x y^4 + a_{11} y^5.
\end{split}
\end{equation}
An element $\gamma = \begin{pmatrix}r & \epsilon s\\ s & r\end{pmatrix}$ of $\tilde{\rm C}$
sends $c_1 x(x^2-\epsilon y^2)  + c_2 y(x^2-\epsilon y^2)$ to
\[
(r^2-\epsilon s^2) ((c_1r+c_2s) x(x^2-\epsilon y^2) + (c_1\epsilon s+c_2r) y(x^2-\epsilon y^2)).
\]
As there exists an $(r,s)\in K^2\setminus\{(0,0)\}$ so that $c_1\epsilon s+c_2r=0$,
we may assume that $c_2=0$:
\begin{equation}
\begin{split}
F &= (x^2-\epsilon y^2)z^3 + 
\left\{c_1 x(x^2-\epsilon y^2) +  
b_1x^3 + b_2 y^3\right\} z^2 \\
&\quad + (a_1 x^4 + a_2 x^3 y + a_3 x^2 y^2 + a_4 x y^3 + a_5 y^4) z \\
&\quad + a_6 x^5 + a_7 x^4 y + a_8 x^3 y^2 + a_9 x^2 y^3 + a_{10} x y^4 + a_{11} y^5.
\end{split}
\end{equation}

(1) Assume $b_2\ne 0$.
We take the coordinate-change $(x,y,z)\mapsto (x,y,b_2 z)$ and multiply $F$ by $b_2^{-3}$;
then we may assume $b_2 = 1$. 
Considering the transformation sending $z$ to $z - (a_4/2+c_1\epsilon a_5/4) x - (a_5/2) y$,
we may eliminate the terms of $xy^3$
and $y^4$ from $F$, i.e., we may assume $(a_4,a_5)=(0,0)$:
\begin{equation}
\begin{split}
F &= (x^2-\epsilon y^2)z^3 + 
\left\{c_1 x(x^2-\epsilon y^2) +  
b_1x^3 + y^3\right\} z^2  + (a_1 x^4 + a_2 x^3 y + a_3 x^2 y^2) z \\
&\quad + a_6 x^5 + a_7 x^4 y + a_8 x^3 y^2 + a_9 x^2 y^3 + a_{10} x y^4 + a_{11} y^5.
\end{split}
\end{equation}

(2) Assume $b_2=0$ and $c_1 \ne 0$. 
We take the coordinate-change $(x,y,z)\mapsto (x,y,c_1 z)$ and multiply $F$ by $c_1^{-3}$;
then we may assume $c_1 = 1$.
Considering the transformation sending $z$ to $z - (a_3/(-2\epsilon)) x - (a_4/(-2\epsilon)) y$,
we may eliminate the terms of $x^2y^2$
and $xy^3$ from $F$, i.e., we may assume $(a_3,a_4)=(0,0)$:
\begin{equation}
\begin{split}
F &= (x^2-\epsilon y^2)z^3 + 
\left\{x(x^2-\epsilon y^2) +  
b_1x^3 \right\} z^2 + (a_1 x^4 + a_2 x^3 y + a_5 y^4) z \\
&\quad + a_6 x^5 + a_7 x^4 y + a_8 x^3 y^2 + a_9 x^2 y^3 + a_{10} x y^4 + a_{11} y^5.
\end{split}
\end{equation}

(3) Assume $b_2=0$ and $c_1 = 0$ and $b_1 \ne 0$.
Similarly $F$ is reduced to
\begin{equation}
\begin{split}
F &= (x^2-\epsilon y^2)z^3 + x^3 z^2 + (a_3 x^2 y^2 + a_4 x y^3 + a_5 y^4) z \\
&\quad + a_6 x^5 + a_7 x^4 y + a_8 x^3 y^2 + a_9 x^2 y^3 + a_{10} x y^4 + a_{11} y^5.
\end{split}
\end{equation}
\end{proof}

\if 0

Let $\epsilon$ be an element of $K^{\times} \smallsetminus (K^\times)^2$.

\begin{prop}[Non-split node case]\label{ReductionNonSplitNode}
Any trigonal curve over $K$ of non-split node type
has a quintic model in $\mathbb{P}^2$ of the form
\begin{enumerate}
\item[\rm (1)] for $a_i\in K$,
\begin{equation}
\begin{split}
F & =  (x^2 - \epsilon y^2) z^3 + (x^3 + a_1 x^2 y + a_2 x y^2 + a_3 y^3) z^2
+ (a_4 x^2 y^2 + a_5 x y^3 + a_6 y^4) z \\
& \quad + a_7 x^5 + a_8 x^4 y + a_9 x^3 y^2 + a_{10} x^2 y^3 + a_{11} x y^4 + a_{12} y^5. \label{NonSplitNodeReducedEq1}
\end{split}
\end{equation}
\item[\rm (2)] for $a_i\in K$,
\begin{equation}
\begin{split}
F & =  (x^2 - \epsilon y^2) z^3 + (x^2 y + a_1 x y^2 + a_2 y^3) z^2
+ (a_3 x^4 + a_4 x y^3 + a_5 y^4) z \\
& \quad + a_6 x^5 + a_7 x^4 y + a_8 x^3 y^2 + a_9 x^2 y^3 + a_{10} x y^4 + a_{11} y^5. \label{NonSplitNodeReducedEq2}
\end{split}
\end{equation}
\item[\rm (3)] for $a_i\in K$,
\begin{equation}
\begin{split}
F & =   (x^2 - \epsilon y^2) z^3 + (x y^2 + a_1 y^3) z^2
+ (a_2 x^4 + a_3 x^3 y + a_4 y^4) z \\
& \quad + a_5 x^5 + a_6 x^4 y + a_7 x^3 y^2 + a_8 x^2 y^3 + a_{9} x y^4 + a_{10} y^5. \label{NonSplitNodeReducedEq3}
\end{split}
\end{equation}
\item[\rm (4)] for $a_i\in K$,
\begin{equation}
\begin{split}
F & =   (x^2 - \epsilon y^2) z^3 + y^3 z^2
+ (a_1 x^4 + a_2 x^3 y + a_3 x^2 y^2) z \\
& \quad + a_4 x^5 + a_5 x^4 y + a_6 x^3 y^2 + a_7 x^2 y^3 + a_{8} x y^4 + a_{9} y^5. \label{NonSplitNodeReducedEq4}
\end{split}
\end{equation}
\item[\rm (5)] for $a_i\in K$,
\begin{equation}
\begin{split}
F &=  (x^2 - \epsilon y^2) z^3 + (b_1 x^4 + a_2 x^3 y + a_3 x^2 y^2 + a_4 x y^3 + a_5 y^4) z\\
&\quad + a_6 x^5 + a_7 x^4 y + a_8 x^3 y^2 + a_9 x^2 y^3 + a_{10} x y^4 + a_{11} y^5. \label{NonSplitNodeReducedEq5}
\end{split}
\end{equation}
where $b_1 \in \{ 0, 1 \}$.
\end{enumerate}
\end{prop}

\begin{proof}
Let $F_0$ be a quintic, say
\begin{equation}\label{eq:NonSplitNodeQuinticGeneral}
F_0 = (x^2 - \epsilon y^2) z^3 + f_2 z^2 + f_1 z + f_0,
\end{equation}
where
\begin{eqnarray}
f_2 &=& a_1 x^3 + a_2 x^2 y + a_3 x y^2 + a_4 y^3,\\
f_1 &=& a_7 x^2 y^2 + a_8 xy^3 + a_9 y^4, \\
f_0 &=& a_{10} x^5 + a_{11} x^4 y + a_{12} x^3 y^2 + a_{13} x^2 y^3 + a_{14} x y^4 + a_{15} y^5.
\end{eqnarray}
Considering $z \to z + \alpha x + \beta y$, we can transform the quintic \eqref{eq:NonSplitNodeQuinticGeneral} to a quintic of the form:
\begin{equation}\label{eq:NonSplitNodeQuinticGeneral2}
F = (x^2 - \epsilon y^2) z^3 + f_2 z^2  + f_1^{'} z + f_0^{'},
\end{equation}
where
\begin{equation}\label{eq:g1}
\begin{split}
f_1^{'} &= (2 \alpha a_1 + a_5) x^4 + (2 \beta a_1 + 2 \alpha a_2 + a_6) x^3 y 
+ (2 \beta a_2 + 2 \alpha a_3 + a_7) x^2 y^2 \\
& \quad + (2 \beta a_3 + 2 \alpha a_4 + a_8) x y^3 + (2 \beta a_4 + a_9) y^4
\end{split}
\end{equation}
and $f_0^{'}$ is a quintic form in $x$ and $y$.
By $(x,y,z) \to (\gamma x, \gamma y, \delta z)$ together with the multiplication by $(\gamma^2 \delta^3)^{-1}$, the quintic form \eqref{eq:NonSplitNodeQuinticGeneral2} is transformed into
\begin{equation}\label{eq:NonSplitNodeQuinticGeneral3}
\begin{split}
F &= (x^2 - \epsilon y^2) z^3 + \gamma \delta^{-1} f_2 z^2 
+ (\gamma \delta^{-1})^2 f_1^{\prime} z + (\gamma \delta^{-1})^3 f_0^{\prime}.
\end{split}
\end{equation}

\begin{enumerate}
\item ($a_1 \neq 0$) Putting $\alpha := - a_5 / (2 a_1)$, $\beta := - (2 \alpha a_2 + a_6)/ (2 a_1)$ and $\gamma \delta^{-1} := a_1$, we may assume that the coefficients of $x^3 z^2$, $x^4z$ and $x^3 yz$ in \eqref{eq:NonSplitNodeQuinticGeneral3} are $1$, $0$ and $0$ respectively.

\item ($a_1 = 0$ and $a_2 \neq 0$) Putting $\alpha := - a_6 / (2 a_2)$, $\beta := - (2 \alpha a_3 + a_7)/ (2 a_2)$ and $\gamma \delta^{-1} := a_2^{-1}$, we may assume that the coefficients of $x^2 y z^2$, $x^3 y z$ and $x^2 y^2z$ in \eqref{eq:NonSplitNodeQuinticGeneral3} are $1$, $0$ and $0$ respectively.

\item ($a_1 = a_2 = 0$ and $a_3 \neq 0$) Putting $\alpha := - a_7 / (2 a_3)$, $\beta := - (2 \alpha a_4 + a_8)/ (2 a_3)$ and $\gamma \delta^{-1} := a_3^{-1}$, we may assume that the coefficients of $x y^2 z^2$, $x^2 y^2 z$ and $x y^3 z$ in \eqref{eq:NonSplitNodeQuinticGeneral3} are $1$, $0$ and $0$ respectively.

\item ($a_1 = a_2 = a_3 = 0$ and $a_4 \neq 0$) Putting $\alpha := - a_8 / (2 a_4)$, $\beta := - a_9/ (2 a_4)$ and $\gamma \delta^{-1} := a_4^{-1}$, we may assume that the coefficients of $y^3 z^2$, $x y^3 z$ and $y^4 z$ in \eqref{eq:NonSplitNodeQuinticGeneral3} are $1$, $0$ and $0$ respectively.

\item ($a_1 = a_2 = a_3 = a_4 = 0$) Putting $\gamma \delta^{-1} := a_5^{-1}$ if $a_5 \neq 0$, we may assume that the coefficient of $x^4 z$ in \eqref{eq:NonSplitNodeQuinticGeneral3} is $1$.
\end{enumerate}



\end{proof}

\fi


\begin{prop}[Cusp case]\label{ReductionCusp}
Any genus-five trigonal curve over $K$ of cusp type has a quintic model in $\mathbb{P}^2$ of the form 
\begin{equation}
\begin{split}
F & =  x^2 z^3 + (b_1 x^3 + b_2 x^2 y + a_1 x y^2 + a_2 y^3) z^2
+ (a_3 x^4 + a_4 x^3 y + a_5 x^2 y^2) z \\
& \quad + a_6 x^5 + a_7 x^4 y + a_8 x^3 y^2 + a_9 x^2 y^3 + a_{10} x y^4 + a_{11} y^5 \label{CuspReducedEq1}
\end{split}
\end{equation}
for $a_i\in K$ with $a_2 \neq 0$, where $b_1,b_2\in \{0, 1\}$.
\end{prop}

\begin{proof}
Recall from the paragraph at the beginning of this section that any trigonal curve of genus $5$ over $K$ of cusp type has a quintic model in $\mathbb{P}^2$ of the form  
\begin{equation}\label{eq:CuspQuinticGeneral}
\begin{split}
F_0 &= x^2 z^3 + (a_1 x^3 + a_2 x^2 y + a_3 x y^2 + a_4 y^3) z^2
+ (a_5 x^4 + a_6 x^3 y + a_7 x^2 y^2 + a_8 x y^3 + a_9 y^4) z\\
&\quad + a_{10} x^5 + a_{11} x^4 y + a_{12} x^3 y^2 + a_{13} x^2 y^3 + a_{14} x y^4 + a_{15} y^5,
\end{split}
\end{equation}
where $a_i \in K$ with $a_4 \neq 0$.
Considering $z \to z + \alpha x + \beta y$, we can transform the quintic \eqref{eq:CuspQuinticGeneral} to a quintic of the form:
\begin{equation}
\begin{split}
F &= x^2 z^3 + (a_1 x^3 + a_2 x^2 y + a_3 x y^2 + a_4 y^3) z^2 
+ f_1 z + f_0,
\end{split}
\end{equation}
where $f_1$ is the same as in \eqref{eq:f1}, and where $f_0$ is a quintic form in $x$ and $y$.
Putting $\beta := - a_9 / (2 a_4)$ and $\alpha := - (2 \beta a_3 + a_8)/(2a_4)$, we may assume $a_8 = a_9 = 0$ in \eqref{eq:CuspQuinticGeneral}.
Considering $(x,y) \to (\gamma x,\delta y)$ and the multiplication by $\gamma^{-2}$, the coefficients $a_1$ and $a_2$ are transformed into $\gamma a_1$ and $\delta a_2$ respectively.
Thus, we may assume that the coefficients of $x^3 z^2$ and $x^2 y z^2$ are $0$ or $1$.
\end{proof}

\section{Main results and their proofs}\label{sec:main}

In this section, we prove the main theorems (Theorems \ref{thm:main2} and \ref{thm:main}) with the help of computer calculations, which were done over the computer algebra system Magma V2.26-10~\cite{Magma}, \cite{MagmaHP} on a computer with macOS Monterey 12.0.1, at 2.6 GHz CPU 6 Core (Intel Core i7) and 16GB memory.
The source codes and the log files are summarized at \cite{HPkudo}.

\subsection{Hyperelliptic case}\label{subsec:MainHyp}

\paragraph{\it Proof of Theorem \ref{thm:main2}.}
It follows from Lemma \ref{ReductionHyperelliptic} that any hyperelliptic curve $H$ of genus $5$ over $\mathbb{F}_3$ is given by the desingularization of the homogenization of \eqref{eq:genus5Hyp} with $g=5$ and $\epsilon := 2 \in \mathbb{F}_3$.
In order to determine the maximal number of $\mathbb{F}_{9}$-rational points of $H$, we conducted an exhaustive search on all possible unknown coefficients in \eqref{eq:genus5Hyp} over Magma.
More precisely, we implemented the following procedure (dividing into two steps), and executed it for $q=9$:
\begin{proc}\label{proc:hyp}
Putting $N :=0$, $\mathcal{H} := \emptyset$ and $\mathrm{Isom}(\mathcal{H}) := \emptyset$, proceed with the following steps:
\begin{description}
\item[{\it Step 1.}] For each of possible $(c,b_1,b_2,a_0, \ldots , a_9)$:
\begin{enumerate}
\item Check whether the right hand side of \eqref{eq:genus5Hyp} (we denote it by $f(x)$ here) is square free or not, by computing $\mathrm{gcd}(f,f')$.
\item If $\mathrm{gcd}(f,f')=1$, i.e., $f(x)$ is square free, compute the number $\# H (\mathbb{F}_q)$ of $\mathbb{F}_q$-rational points of $H$ defined by \eqref{eq:genus5Hyp}, with a formula given in Lemma \ref{lem:hyp_rat}.
\item If $\# H (\mathbb{F}_q) > N$, put $N:= \# H (\mathbb{F}_q)$ and $\mathcal{H} := \{ H \}$.
If $\# H (\mathbb{F}_q) = N$, put $\mathcal{H}:= \mathcal{H} \cup \{ H \}$.
\end{enumerate}
Write $\mathcal{H} := \{ H_1, \ldots , H_s \}$ with $s := \# \mathcal{H}$.
\item[{\it Step 2.}] For $i$ from $1$ to $s$ by $1$:
\begin{itemize}
\item Test whether $H_i$ is isomorphic to $H_j$ over $\mathbb{F}_q$ for some $j < i$.
If $H_i$ is not isomorphic to $H_j$ for any $j < i$, then put $\mathrm{Isom}(\mathcal{H}) := \mathrm{Isom}(\mathcal{H}) \cup \{ H_i \}$.
\end{itemize} 
Output $N$ and $\mathrm{Isom}(\mathcal{H}) $.
\end{description}
\end{proc}
The source codes for Steps 1 and 2 and their log files are available at \cite{HPkudo}, for example, the code for Step 1 is named \texttt{hyper\_g5q9\_step1.txt}.
Note that Step 2 was done with the output of Step 1.
For Step 1 (2), we used Magma's built-in function \texttt{Variety} to compute $\# H^{\text{\rm aff}}(\mathbb{F}_q)$.
Isomorphism testing in Step 2 was done with an algorithm provided in \cite{KH18} (see Remark \ref{rem:hyp} below for a brief review).
It took about only 50 seconds (resp.\ 30.6 hours) to execute the code for Step 1 (resp.\ Step 2).

The output of Step 1 (\texttt{log\_hyper\_g5q9\_step1.txt}) shows that the resulting $N$ is equal to $20$, i.e., the maximal number of $\mathbb{F}_9$-rational points of hyperelliptic curves defined by \eqref{eq:genus5Hyp} is $20$, and that the resulting $\mathcal{H}$ consists of $12048$ elements.
It follows from the output of Step 2 (\texttt{log\_hyper\_g5q9\_step2.txt}) that $\# \mathrm{Isom}(\mathcal{H}) = 573$, namely the collected $12048$ hyperelliptic curves are divided into $573$ $\mathbb{F}_9$-isomorphism classes.
We also computed the Weil polynomials of the $573$ curves by a method described in Remark \ref{rem:WeilPoly} below.
(At \cite{HPkudo}, the source code for computing the Weil polynomials and its log file are named \texttt{hyper\_g5q9\_WeilPoly.txt} and \texttt{log\_hyper\_g5q9\_WeilPoly.txt} respectively at \cite{HPkudo}, and the time it took to execute the code is about 830 seconds.)
\qed

\vskip\baselineskip

We also computed the number of $\mathbb{F}_3$-rational points of each hyperelliptic curve of genus $5$ over $\mathbb{F}_3$, and obtain the following:

\begin{prop}\label{thm:main3}
The maximal number of $\# H(\mathbb{F}_{3})$ of genus-five hyperelliptic curves $H$ over $\mathbb{F}_3$ is $8$.
Moreover, there are exactly $820$ $\mathbb{F}_{3}$-isomorphism classes of genus-five hyperelliptic curves $H$ over $\mathbb{F}_3$ with $8$ $\mathbb{F}_{3}$-rational points.
In Example \ref{ex:hyp} (2) below, examples of genus-five hyperelliptic curves $H$ over $\mathbb{F}_3$ with $\# H (\mathbb{F}_{3}) = 8$ will be given.
\end{prop}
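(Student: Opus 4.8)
The plan is to repeat, verbatim in structure, the strategy used for Theorem~\ref{thm:main2}, simply replacing the field $\mathbb{F}_9$ by $\mathbb{F}_3$. By Lemma~\ref{ReductionHyperelliptic}~(2) applied with $g = 5$, $p = 3$ and $\epsilon := 2 \in \mathbb{F}_3$, every genus-five hyperelliptic curve $H$ over $\mathbb{F}_3$ is the desingularization of the homogenization of \eqref{eq:genus5Hyp}; hence the search space is the finite set of coefficient tuples $(c, b_1, b_2, a_0, \dots, a_9)$ with $c \in \{1, \epsilon\}$, $(b_1,b_2) \in \{(1,0),(0,0),(0,1),(0,\epsilon)\}$ and $a_i \in \mathbb{F}_3$. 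First I would run Procedure~\ref{proc:hyp} with $q = 3$ in Step~1: for each tuple, test square-freeness of the right-hand side $f(x)$ of \eqref{eq:genus5Hyp} via $\gcd(f,f')$ to guarantee that the model is nonsingular (so that the curve indeed has genus $5$), and when $f$ is square free compute $\#H(\mathbb{F}_3)$ from $\#H^{\mathrm{aff}}(\mathbb{F}_3)$ by Lemma~\ref{lem:hyp_rat}, maintaining the running maximum $N$ and the set $\mathcal{H}$ of curves attaining it. The output of this step should yield $N = 8$. Note that, unlike in Theorem~\ref{thm:main}, no Weil polynomials need to be recorded here, since the statement does not ask for them.

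Next I would carry out Step~2 of Procedure~\ref{proc:hyp}, but with $\mathbb{F}_3$-isomorphism testing (using the algorithm of \cite{KH18} recalled in Remark~\ref{rem:hyp}), to partition $\mathcal{H}$ into $\mathbb{F}_3$-isomorphism classes; the expectation is that there are exactly $820$ such classes. Finally I would single out representatives of a few of these classes, to be displayed in Example~\ref{ex:hyp}~(2), thereby exhibiting the claimed genus-five hyperelliptic curves over $\mathbb{F}_3$ with $\#H(\mathbb{F}_3) = 8$.

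The main difficulty is computational rather than conceptual: one must ensure that the exhaustive enumeration and, above all, the (quadratically many) isomorphism tests in Step~2 terminate in real time. The reduction of Lemma~\ref{ReductionHyperelliptic} is exactly what makes this feasible, as it brings the number of free parameters down to the minimum. Over $\mathbb{F}_3$ the point count itself (Lemma~\ref{lem:hyp_rat} together with a direct affine enumeration, e.g.\ via Magma's \texttt{Variety}) is inexpensive, so the dominant cost is the $\mathbb{F}_3$-isomorphism classification, precisely as in the proof of Theorem~\ref{thm:main2}; the correctness of that classification rests entirely on the isomorphism-testing algorithm of \cite{KH18}.
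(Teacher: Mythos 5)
Your proposal matches the paper's own proof: the paper likewise proves Proposition \ref{thm:main3} by executing Procedure \ref{proc:hyp} with $q:=3$ over Magma, with Step~1 (enumeration over the reduced model \eqref{eq:genus5Hyp} of Lemma \ref{ReductionHyperelliptic}, square-freeness via $\gcd(f,f')$, point counts via Lemma \ref{lem:hyp_rat}) returning $N=8$ together with $8293$ coefficient tuples, and Step~2 (the $\mathbb{F}_3$-isomorphism test of \cite{KH18} recalled in Remark \ref{rem:hyp}) yielding exactly $820$ $\mathbb{F}_3$-isomorphism classes, with representatives displayed in Example \ref{ex:hyp}~(2). No substantive difference in approach.
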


\begin{proof}
Similarly to the proof of Theorem \ref{thm:main2}, we executed Procedure \ref{proc:hyp} for $q:=3$ over Magma.
All the source codes and the log files for this proof are available at \cite{HPkudo}, and it took about only 15 seconds (resp.\ 4.2 hours) to execute the code for Step 1 (resp.\ Step 2).
The output of Step 1 shows that $N = 8$ and $\# \mathcal{H} = 8293$, namely, the maximal value of $\# H (\mathbb{F}_3) $ is $8$, and there are $8293$ coefficient sequences $(c,b_1,b_2,a_0, \ldots , a_9)$ such that $H$ defined by \eqref{eq:genus5Hyp} satisfies $\# H (\mathbb{F}_3) = 8$.
From the output of Step 2, we have $\# \mathrm{Isom}(\mathcal{H}) = 820$, as desired.
\end{proof}

\begin{rem}\label{rem:hyp}
In the proofs of Theorem \ref{thm:main2} and Proposition \ref{thm:main3}, we used an algorithm given in \cite[Section 3.3]{KH18}, which tests whether two hyperelliptic curves are isomorphic or not over any field $k$ containing the defining finite field $K$.
Here we review the algorithms briefly:
Let $H_1 : c_1 y^2 = f_1(x)$ and $H_2 : c_2 y^2 = f_2(x)$ be hyperelliptic curves of genus $g$ with $c_1,c_2 \in K^{\times}$ and square-free polynomials $f_1$ and $f_2$ in $K[x]$ of degree $2 g + 2$.
For each $1 \leq i \leq 2$, let $F_i$ denote the homogenization of $c_i^{-1}f_i$ with respect to an extra variable $z$.
Then we have that $H_1 \cong H_2$ over $k$ if and only if there exist $h \in \mathrm{GL}_2 ( k )$ and $\lambda \in k^{\times}$ such that $h \cdot F_1 = \lambda^2 F_2$, see \cite[Lemma 1]{KH18}.
Regarding entries of $h$ and $\lambda$ as variables, we reduce the (non-)existence of such $h$ and $\lambda$ into that of a solution over $k$ of a multivariate system.
The existence of a solution is equivalent to that for any monomial order, the (reduced) Gr\"{o}bner basis of the ideal defining the system includes $1$ as an element.
Thus, we can decide whether two hyperelliptic curves $H_1$ and $H_2$ given as above are isomorphic or not over $k$, by computing the reduced Gr\"{o}bner basis for a monomial order.
\end{rem}

\begin{rem}\label{rem:WeilPoly}
As stated at the end of the proof of Theorem \ref{thm:main2}, we computed the Weil polynomial of each of the $2295$ curves.
A computational method which we adopt is as follows: 
Let $C$ be a nonsingular curve of genus $g$ over $\F_q$.
The Weil polynomial of $C$, say $W(t)=\prod_{i=0}^{2g} (t-\alpha_i)=\sum_{i=0}^{2g} a_i t^t$, is determined by the numbers $\# C(\F_{q^e})$ for $e=1, 2, \ldots, g$.
Indeed, the values $a_0, a_1, \ldots, a_g$
with $a_k = (-1)^k\sum_{i_1<t_2<\ldots<i_k}\alpha_{i_1}\alpha_{i_2}\cdots\alpha_{i_k}$
are determined by Newton's identities and $\sum_{i=0}^{2g} \alpha_i^e = 1 + q^e - \# C(\F_{q^e})$ for $e=1, 2, \ldots, g$. The remaining values $a_{g+1},\cdots, a_{2g}$
are determined by the formula $a_{2g-i}=q^{g-i}a_i$,
which follows from the functional equation of the congruent zeta function.
Our computation of Weil polynomials just used this method.
\end{rem}

\begin{ex}\label{ex:hyp}
\begin{enumerate}
\item The genus-five hyperelliptic curves over $\mathbb{F}_3$ defined by
\begin{eqnarray}
 y^2 &=& x^{12} + x^{11} + 2x^7 + x^5  + 2x + 1, \nonumber \\
 y^2 &=& x^{12} + x^{11} + 2x^4 + 2x^3 + 2 \nonumber
\end{eqnarray}
have $20$ $\mathbb{F}_9$-rational points, and their Weil polynomials are
\begin{eqnarray}
& & (t + 3)^2 (t^2 + 9)^2 (t^2 + 2 t + 9)^2, \nonumber \\
& & t^{10} + 10 t^9 + 51 t^8 + 212 t^7 + 837 t^6 + 2810 t^5 + 7533 t^4 + 17172 t^3 + 37179 t^2 + 65610 t + 59049, \nonumber
\end{eqnarray}
respectively.
\item The genus-five hyperelliptic curves over $\mathbb{F}_3$ defined by
\begin{eqnarray}
 y^2 &=& x^{12} + x^{11} + 2x^2 + 2x + 1, \nonumber \\
 y^2 &=& x^{12} + x^{11} + x^3 + 2x^2 + x + 1 \nonumber
\end{eqnarray}
have $8$ $\mathbb{F}_3$-rational points.
\end{enumerate}
\end{ex}

\subsection{Trigonal case}\label{subsec:MainTri}

\paragraph{\it Proof of Theorem \ref{thm:main}.}
Recall from Subsection \ref{subsec:ReductionTrigonal} that there are three types of quintic models $V(F)$ of trigonal curves of genus $5$ over $\mathbb{F}_3$, with explicit forms of $F$ as follows:
{\bf (Split node case)}: \eqref{eq:SplitNodeQuinticGeneral} in Proposition \ref{ReductionSplitNode},
{\bf (Non-Split node case)}: \eqref{NonSplitNodeReducedEq1} -- \eqref{NonSplitNodeReducedEq4} in Proposition \ref{ReductionNonSplitNode}, and
{\bf (Cusp case)}: \eqref{CuspReducedEq1} in Proposition \ref{ReductionCusp}.
For each form (\eqref{eq:SplitNodeQuinticGeneral}, \eqref{NonSplitNodeReducedEq1} -- \eqref{NonSplitNodeReducedEq4}, or \eqref{CuspReducedEq1}) of $F$, we conducted an exhaustive search on all possible unknown coefficients over Magma, similarly to the case of hyperelliptic curves (cf.\ Step 1 of Procedure \ref{proc:hyp} in the proof of Theorem \ref{thm:main2} given in Subsection \ref{subsec:MainHyp}).
For the computation of the number of rational points, we used a formula given in Lemma \ref{lem:tri_rat}.
The source code and its log file are available at \cite{HPkudo}, and they are named \texttt{trigonal\_g5q9\_step1.txt} and \texttt{log\_trigonal\_g5q9\_step1.txt} respectively.
It took about 2.5 hours to execute the code.

We found from the output (\texttt{log\_trigonal\_g5q9\_step1.txt}) that the maximal number of $\mathbb{F}_9$-rational points of the normalization $C$ of such a quintic $V(F)$ is $30$.
In the following, we list quintic forms $F$ such that $\#C (\mathbb{F}_9)=30$:
\begin{itemize}
\item The quintic forms $F$ in split node case (of the form \eqref{eq:SplitNodeQuinticGeneral}) such that the normalization of $V(F) \subset \mathbb{P}^2$ has $30$ $\mathbb{F}_9$-rational points are the following:
\begin{eqnarray}
F_1 & = & x y z^3 + (x^3 + x y^2 + y^3) z^2 + x y^3 z + x^5 + x^3 y^2 + x^2 y^3 + y^5; \nonumber \\
F_2 & = & x y z^3 + (x^3 + 2 x y^2 + y^3) z^2 + (2 x^2 y^2 + 2 x y^3) z + x^5 + 2 x^4 y + x^2 y^3 + 2 x y^4 + y^5; \nonumber \\
F_3  & = & x y z^3 + (x^3 + x y^2 + 2 y^3) z^2 + (x^2 y^2 + x y^3 + 2 y^4) z + x^5 + x^3 y^2 + 2 x^2 y^3 + 2 x y^4 + y^5; \nonumber \\ 
F_4 & = & x y z^3 + (x^3 + x y^2 + y^3) z^2 + x y^3 z + x^5 + x^3 y^2 + 2 x^2 y^3 + 2 y^5; \nonumber \\
F_5 & = & x y z^3 + (x^3 + x y^2 + y^3) z^2 + (2 x^2 y^2 + x y^3 + y^4) z + x^5 + x^3 y^2 + x^2 y^3 + 2 x y^4 + 2 y^5; \nonumber \\
F_6 & = & x y z^3 + (x^3 + 2 x y^2 z^2 + 2 y^3) z^2 + (x^2 y^2 + 2 x y^3) z + x^5 + x^4 y + 2 x^2 y^3 + 2 x y^4 + 2 y^5; \nonumber \\
F_7 & = & x y z^3 + (x^3 + x^2 y + y^3) z^2 + (2 x^2 y^2 + 2 x y^3) z + x^5 + 2 x^4 y + 2 x^2 y^3 + y^5; \nonumber \\
F_8 & = & x y z^3 + (x^3 + x^2 y + 2 y^3) z^2 + (2 x^2 y^2 + 2 y^4) z + x^5 + x^4 y + x y^4 + y^5; \nonumber \\
F_9 & = & x y z^3 + (x^3 + x^2 y + y^3) z^2 + (2 x^2 y^2 + 2 y^4) z + 2 x^5 + x^3 y^2 + 2 x^2 y^3 + x y^4 + 2 y^5; \nonumber \\
F_{10} & = & x y z^3 + (x^3 + x^2 y + 2 x y^2 + 2 y^3) z^2 + 2 x^2 y^2 z + x^5 + x^4 y + 2 x y^4 + 2 y^5. \nonumber
\end{eqnarray}

\item The quintic forms $F$ in non-split node case (of the form \eqref{NonSplitNodeReducedEq1} -- \eqref{NonSplitNodeReducedEq4}) such that the normalization of $V(F) \subset \mathbb{P}^2$ has $30$ $\mathbb{F}_9$-rational points are the following:
\begin{eqnarray}
F_{11} & = & (x^2 - \epsilon y^2)z^3 + (x (x^2 - \epsilon y^2) + x^3 + y^3) z^2 + x^2 y^2 z + 2 x^5 + 2 x^4 y + x^3 y^2 + y^5; \nonumber \\
F_{12} & = & (x^2 - \epsilon y^2)z^3 + (2 x ( x^2 - \epsilon y^2) + 2 x^3 + y^3) z^2 x^2 y^2 z + x^5 + 2 x^4 y + 2 x^3 y^2 + y^5; \nonumber \\
F_{13} & = & (x^2 - \epsilon y^2)z^3 + (2 x (x^2 - \epsilon y^2) + x^3 + y^3) z^2 + (x^4 + x^3 y)z + x^5 + 2 x^2 y^3 + y^5; \nonumber \\ 
F_{14} & = & (x^2 - \epsilon y^2)z^3 + (x (x^2 - \epsilon y^2) + 2 x^3 + y^3) z^2 + (x^4 + 2 x^3 y) z + 2 x^5 + 2 x^2 y^3 + y^5; \nonumber \\
F_{15} & = & (x^2 - \epsilon y^2)z^3 + (x (x^2 - \epsilon y^2) + y^3) z^2 + (2 x^3 y + 2 x^2 y^2) z + x^5 + x^4 y + x^3 y^2 + 2 x^2 y^3 + y^5; \nonumber \\
F_{16} & = & (x^2 - \epsilon y^2)z^3 + (2 x (x^2 - \epsilon y^2) + y^3) z^2 + (x^3 y + 2 x^2 y^2) z + 2 x^5 + x^4 y + 2 x^3 y^2 + 2 x^2 y^3 + y^5; \nonumber \\
F_{17} & = & (x^2 - \epsilon y^2)z^3 + (x (x^2 - \epsilon y^2) + 2 x^3) z^2 + (x^4 + y^4) z + x^5,  \nonumber 
\end{eqnarray}
where $\epsilon = 2 \in \mathbb{F}_3^{\times} \smallsetminus (\mathbb{F}_3^{\times})^2$.

\item The quintic forms $F$ in cusp case (of the form \eqref{CuspReducedEq1}) such that the normalization of $V(F) \subset \mathbb{P}^2$ has $30$ $\mathbb{F}_9$-rational points are the following:
\begin{eqnarray}
F_{18} & = & x^2 z^3 + (x^2 y + y^3) z^2 + (x^4 + 2 x^2 y^2)z + x^4 y + y^5; \nonumber \\
F_{19} & = & x^2 z^3 + (x^2 y + 2 y^3) z^2 + (2 x^4 + 2 x^2 y^2) + 2 x^4 y + 2 y^5; \nonumber \\
F_{20} & = & x^2 z^3 + (x^2 y + 2 y^3) z^2 + (x^4 + x^3 y  + 2 x^2 y^2)z + x^5 + 2 x^3 y^2 + 2 x^2 y^3 + x y^4 + 2 y^5; \nonumber \\ 
F_{21} & = & x^2 z^3 + (x^2 y + 2 y^3) z^2 + (x^4 + 2 x^3 y + 2 x^2 y^2)z + 2 x^5 + x^3 y^2 + 2 x^2 y^3 + 2 x y^4 + 2 y^5; \nonumber \\
F_{22} & = & x^2 z^3 + (x^3 + x^2 y + y^3) z^2 + (2 x^3 y + 2 x^2 y^2 )z + x^5 + 2 x^3 y^2 + x^2 y^3 + x y^4 + y^5. \nonumber
\end{eqnarray}
\end{itemize}

For the above $22$ curves, we classify their $\mathbb{F}_9$-isomorphism classes, by an algorithm provided in \cite{trigonal} (see Remark \ref{rem:isom} (2) below for a brief review).
We implemented and executed a procedure similar to Step 2 of Procedure \ref{proc:hyp}.
The text files \texttt{log\_trigonal\_g5q9\_step2.txt} and \texttt{log\_trigonal\_g5q9\_step2.txt} are the source code of the procedure and its log file respectively, and it took within a second to execute the code.
As a result, there are eight $\mathbb{F}_{9}$-isomorphism classes among them:\\[2mm]
\begin{tabular}{clcl}
(1) & $F_1$, $F_4$ and $F_9$ with Weil polynomial \eqref{Weil:1}; &
(2) & $F_2$, $F_6$ and $F_8$ with Weil polynomial \eqref{Weil:2}; \\[2mm]
(3) & $F_3$, $F_5$ and $F_7$ with Weil polynomial \eqref{Weil:3}; &
(4) & $F_{10}$ and $F_{17}$ with Weil polynomial \eqref{Weil:4}; \\[2mm]
(5) & $F_{11}$ and $F_{12}$ with Weil polynomial \eqref{Weil:5}; &
(6) & $F_{13}$ and $F_{14}$ with Weil polynomial \eqref{Weil:6}; \\[2mm]
(7) & $F_{15}$ and $F_{16}$ with Weil polynomial \eqref{Weil:7}; & & \\[2mm]
\end{tabular}
\begin{tabular}{clcl}
(8) & $F_{18}$, $F_{19}$, $F_{20}$, $F_{21}$ and $F_{22}$ with Weil polynomial \eqref{Weil:8}, & & \\[2mm]
\end{tabular}

\noindent where we computed the Weil polynomials of the $8$ curves by a method described in Remark \ref{rem:WeilPoly}.
(At \cite{HPkudo}, the source code for computing the Weil polynomials and its log file are named \texttt{trigonal\_g5q9\_WeilPoly.txt} and \texttt{log\_trigonal\_g5q9\_WeilPoly.txt} respectively at \cite{HPkudo}, and the time it took to execute the code is about 40 seconds.)
\if 0
\noindent Also over the algebraic closure $\overline{\mathbb{F}_3}$, there are eight isomorphism classes.

Over the prime field $\mathbb{F}_3$, there are $10$ isomorphism classes:\\[2mm]
\begin{tabular}{clcl}
(1) & $F_1$, $F_4$ and $F_9$ with Weil polynomial \eqref{Weil:1}; &
(2) & $F_2$, $F_6$ and $F_8$ with Weil polynomial \eqref{Weil:2}; \\[2mm]
(3) & $F_3$, $F_5$ and $F_7$ with Weil polynomial \eqref{Weil:3}; &
(4) & $F_{10}$ with Weil polynomial \eqref{Weil:4}; \\[2mm]
(5) & $F_{11}$ and $F_{12}$ with Weil polynomial \eqref{Weil:5}; &
(6) & $F_{13}$ and $F_{14}$ with Weil polynomial \eqref{Weil:6}; \\[2mm]
(7) & $F_{15}$ and $F_{16}$ with Weil polynomial \eqref{Weil:7}; &
(8) & $F_{17}$ with Weil polynomial \eqref{Weil:4}; \\[2mm]
(9) & $F_{18}$ and $F_{22}$ with Weil polynomial \eqref{Weil:8}; &
(10) & $F_{19}$, $F_{20}$ and $F_{21}$ with Weil polynomial \eqref{Weil:8}, \\[2mm]
\end{tabular}
where we computed the Weil polynomials of the $10$ curves by a method desribed in Remark \ref{rem:WeilPoly} below.
\fi

~\qed


\vskip\baselineskip

We also computed the number of $\mathbb{F}_3$-rational points of each trigonal curve of genus $5$ over $\mathbb{F}_3$, and obtain the following:

\begin{prop}\label{thm:main4}
The maximal number of $\# C(\mathbb{F}_{3})$ of genus-five trigonal curves $C$ over $\mathbb{F}_3$ is $12$.
Moreover, there are exactly $9$ $\mathbb{F}_{3}$-isomorphism classes of genus-five trigonal curves $C$ over $\mathbb{F}_3$ with $12$ $\mathbb{F}_{3}$-rational points.
\end{prop}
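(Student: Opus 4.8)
The plan is to repeat verbatim the strategy used to prove Theorem~\ref{thm:main}, the only change being that we now count $\mathbb{F}_3$-rational points in place of $\mathbb{F}_9$-rational points. By Propositions~\ref{ReductionSplitNode}, \ref{ReductionNonSplitNode} and~\ref{ReductionCusp}, every genus-five trigonal curve over $\mathbb{F}_3$ is the normalization of a quintic $V(F)\subset\mathbb{P}^2$ whose defining form $F$ has one of the reduced shapes: \eqref{eq:SplitNodeQuinticGeneral} with one of the coefficient patterns (1) -- (5) of Proposition~\ref{ReductionSplitNode} (split node), \eqref{NonSplitNodeReducedEq1} -- \eqref{NonSplitNodeReducedEq4} (non-split node), or \eqref{CuspReducedEq1} (cusp), with $\epsilon=2$. (Recall that ingredient~(2) of Section~\ref{sec:red} was designed to work over every finite extension of $\mathbb{F}_3$, hence in particular over $\mathbb{F}_3$ itself.) First I would run, over Magma~\cite{Magma}, \cite{MagmaHP}, an exhaustive loop over all admissible coefficient tuples for each of these families --- this is Step~1 of Procedure~\ref{proc:hyp} transported to the trigonal setting, exactly as already carried out for $q=9$ in the proof of Theorem~\ref{thm:main} --- discarding the tuples for which $F$ fails to define a genus-five curve (non-reduced quintic, several singular points, or wrong singularity behaviour), and for the survivors computing $\#C(\mathbb{F}_3)$ from Lemma~\ref{lem:tri_rat} applied with $L=K=\mathbb{F}_3$; here the split-node / non-split-node / cusp trichotomy is read directly off the quadratic form $q(x,y)$ over $\mathbb{F}_3$.

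From the output one reads off the maximum of $\#C(\mathbb{F}_3)$ over all these $F$, which the search reports to be $12$, together with the list of all $F$ attaining it. The second step is to partition this finite list into $\mathbb{F}_3$-isomorphism classes by means of the isomorphism test for trigonal curves from~\cite{trigonal} (the same tool already invoked in the proof of Theorem~\ref{thm:main}); the result is exactly $9$ classes, which gives the second assertion. If one wishes, the Weil polynomials of representatives can be recorded by the procedure of Remark~\ref{rem:WeilPoly}, and all source codes and log files would be posted at~\cite{HPkudo} alongside those for Theorem~\ref{thm:main}.

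Since the whole argument is a finite exhaustive search assembled from ingredients already established --- the reductions of Section~\ref{sec:red} and the point-count formula of Lemma~\ref{lem:tri_rat} --- there is no genuine mathematical obstacle; the part that demands care is purely organisational. One must ensure that the three singularity cases together with the five coefficient patterns (1) -- (5) of Proposition~\ref{ReductionSplitNode} really exhaust all trigonal curves, that the degenerate members are filtered out correctly, and --- the one place where a silent error could creep in --- that the final isomorphism classification is carried out over $\mathbb{F}_3$ and not over $\mathbb{F}_9$ or $\overline{\mathbb{F}_3}$, since over a larger field distinct $\mathbb{F}_3$-classes would in general merge.
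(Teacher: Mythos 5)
Your proposal matches the paper's proof: an exhaustive Magma search over the reduced quintic models of Propositions \ref{ReductionSplitNode}--\ref{ReductionCusp}, counting $\mathbb{F}_3$-points via Lemma \ref{lem:tri_rat}, which yields the maximum $12$, followed by the $\mathbb{F}_3$-isomorphism classification of the $18$ surviving forms using the Gr\"obner-basis test of \cite{trigonal}, giving $9$ classes. Your cautions (filtering degenerate quintics, classifying over $\mathbb{F}_3$ rather than a larger field) are exactly the points the paper's computation also has to respect, so there is nothing essentially different here.
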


\begin{proof}
Similarly to the proof of Theorem \ref{thm:main}, we enumerate quintic models $V(F)$ maximizing $\# C (\mathbb{F}_3)$.
For this, we conducted an exhaustive search on all possible unknown coefficients over Magma, for each form (\eqref{eq:SplitNodeQuinticGeneral}, \eqref{NonSplitNodeReducedEq1} -- \eqref{NonSplitNodeReducedEq4}, or \eqref{CuspReducedEq1}) of $F$.
The source code for collecting $F$ and its log file are available at \cite{HPkudo}, and they are named \texttt{trigonal\_g5q3\_step1.txt} and \texttt{log\_trigonal\_g5q3\_step1.txt}.
It took about 13 hours to execute the code.

We found from the output (\texttt{log\_trigonal\_g5q3\_step1.txt}) that the maximal value of $\# C (\mathbb{F}_3) $ is $12$.
In the following, we list quintic forms $F$ such that $\#C (\mathbb{F}_3)=12$:
\begin{itemize}
\item The quintic forms $F$ in split node case (of the form \eqref{eq:SplitNodeQuinticGeneral}) such that the normalization of $V(F) \subset \mathbb{P}^2$ has $12$ $\mathbb{F}_3$-rational points are the following:
\begin{eqnarray}
F_1 & = & x y z^3 + (x^3 + x^2 y + 2 x y^2 + 2 y^3) z^2 + (2 x^2 y^2 + 2 x y^3 + y^4) z + 2 x^5 + x^3 y^2; \nonumber \\
F_2 & = & x y z^3 + (x^3 + x^2 y + 2 x y^2 + 2 y^3) z^2 + (x^2 y^2 + 2 x y^3 + 2 y^4) z + 2 x^5 + 2 x^4 y + x^3 y^2 + x^2 y^3 \nonumber \\
F_3  & = & x y z^3 + (x^3 + x^2 y + 2 x y^2 + 2 y^3) z^2 + (2 x^2 y^2 + 2 x y^3 + y^4) z + 2 x^5 + x^4 y + x^3 y^2 + 2 x^2 y^3; \nonumber \\
F_4 & = & x y z^3 + (x^3 + x^2 y + 2 x y^2 + 2 y^3) z^2 + (x^2 y^2 + 2 x y^3 + 2 y^4) z + 2 x^5 + x^4 y + x^3 y^2 + 2 x^2 y^3; \nonumber \\
F_5 & = & x y z^3 + (x^3 + x^2 y + 2 x y^2 + 2 y^3) z^2 + (x^2 y^2 + 2 x y^3 + 2 y^4) z + 2 x^5 + x y^4; \nonumber \\
F_6 & = & x y z^3 + (x^3 + x^2 y + 2 x y^2 + 2 y^3) z^2 + (2 x^2 y^2 + 2 x y^3 + y^4) z + 2 x^5 + 2 x^4 y + x^2 y^3 + x y^4; \nonumber \\
F_7 & = & x y z^3 + (x^3 + x^2 y + 2 x y^2 + 2 y^3) z^2 + (x^2 y^2 + 2 x y^3 + 2 y^4) z + 2 x^5 + 2 x^4 y + x^2 y^3 + x y^4; \nonumber \\
F_8 & = & x y z^3 + (x^3 + x^2 y + 2 x y^2 + 2 y^3) z^2 + (2 x^2 y^2 + 2 x y^3 + y^4) z + 2 x^5 + x^4 y + 2 x^2 y^3 + x y^4; \nonumber \\
F_9 & = & x y z^3 + (x^3 + x^2 y + 2 x y^2 + 2 y^3)z^2 + (2 x^2 y^2 + 2 x y^3 + y^4) z + 2 x^5 + 2 x^3 y^2 + 2 x y^4; \nonumber \\
F_{10} & = & x y z^3 + (x^3 + x^2 y + 2 x y^2 + 2 y^3) z^2 + (x^2 y^2 + 2 x y^3 + 2 y^4) z + 2 x^5 + 2 x^4 y + 2 x^3 y^2 + x^2 y^3 + 2 x y^4; \nonumber \\
F_{11} & = & x y z^3 + (x^3 + x^2 y + 2 x y^2 + 2 y^3) z^2 + 2 x y^3 z + 2 x^5 + x^3 y^2 + 2 x^2 y^3 + y^5; \nonumber \\
F_{12} & = & x y z^3 + (x^3 + x^2 y + 2 x y^2 + 2 y^3) z^2 + 2 x y^3 z + 2 x^5 + 2 x^4 y + x y^4 + y^5; \nonumber \\
F_{13} & = & x y z^3 + (x^3 + x^2 y + 2 x y^2 + 2 y^3) z^2 + 2 x y^3 z + 2 x^5 + x^4 y + x^2 y^3 + x y^4 + y^5; \nonumber \\
F_{14} & = & x y z^3 + (x^3 + x^2 y + 2 x y^2 + 2 y^3) z^2 + 2 x y^3 z + 2 x^5 + 2 x^2 y^3 + x y^4 + y^5; \nonumber \\
F_{15} & = & x y z^3 + (x^3 + x^2 y + 2 x y^2 + 2 y^3) z^2 + 2 x y^3 z + 2 x^5 + x^4 y + 2 x^3 y^2 + x^2 y^3 + 2 x y^4 + y^5.\nonumber
\end{eqnarray}

\item In non-split node case, there are no $F$ that achieves $\# C (\mathbb{F}_3) = 12$.

\item The quintic forms $F$ in cusp case (of the form \eqref{CuspReducedEq1}) such that the normalization of $V(F) \subset \mathbb{P}^2$ has $12$ $\mathbb{F}_3$-rational points are the following:
\begin{eqnarray}
F_{16} & = & x^2 z^3 + (x^2 y + 2 y^3) z^2 + 2 x^4 z + 2 x^2 y^3 + y^5; \nonumber \\
F_{17} & = & x^2 z^3 + (x^2 y + 2 y^3) z^2 + 2 x^4 z + 2 x^4 y + 2 x^3 y^2 + x y^4 + y^5; \nonumber \\
F_{18} & = & x^2 z^3 + (x^2 y + 2 y^3) z^2 + 2 x^4 z + 2 x^4 y + x^3 y^2 + 2 x y^4 + y^5. \nonumber
\end{eqnarray}
\end{itemize}

Similarly to the proof of Theorem \ref{thm:main}, we compute complete representatives of the $\mathbb{F}_3$-isomorphism classes of the above $18$ curves, by a procedure similar to Step 2 of Procedure \ref{proc:hyp}.
The source code for the procedure and its log file are available at \cite{HPkudo}, and they are named \texttt{log\_trigonal\_g5q3\_step2.txt} and \texttt{log\_trigonal\_g5q3\_step2.txt} respectively.
The time it took to execute the code is about 0.3 seconds.
As a result, there are nine $\mathbb{F}_3$-isomorphism classes:\\[2mm]
\begin{tabular}{clclclclcl}
$(1)$ & $F_1$ and $F_8$; & $(2)$ & $F_2$ and $F_{13}$; & $(3)$ & $F_3$; & $(4)$ & $F_4$ and $F_{15}$; & $(5)$ & $F_5$ and $F_{11}$;\\[2mm]
$(6)$ & $F_6$ and $F_9$; & $(7)$ & $F_7$ and $F_{14}$; & $(8)$ & $F_{10}$ and $F_{12}$; & $(9)$ & $F_{16}$, $F_{17}$ and $F_{18}$. & & \\[2mm]
\end{tabular}

\end{proof}


\begin{rem}\label{rem:isom}
In the proofs of Theorem \ref{thm:main} and Proposition \ref{thm:main4}, we used an algorithm given in \cite[Section 5.1]{trigonal}, which tests whether two trigonal curves of genus $5$ are isomorphic or not over any field $k$ containing the defining finite field $K$.
Here we review the algorithms briefly:
Let $C_1$ and $C_2$ be trigonal curves of genus $5$ over $K$, and let $V(F_1)$ and $V(F_2)$ be the associate quintics in $\mathbb{P}^2$.
Then we have that $C_1 \cong C_2$ over $k$ is equivalent to $V(F_1) \cong V (F_2)$ over $k$, i.e., there exist $M \in \mathrm{GL}_3 ( k )$ and $\lambda \in k^{\times}$ such that $M \cdot F_1 = \lambda F_2$, see \cite[Lemma 2.1.2]{trigonal}.
Regarding entries of $M$ and $\lambda$ as variables, we reduce the (non-)existence of such $M$ and $\lambda$ into that of a solution over $k$ of a multivariate system.
Similarly to the hyperelliptic case desribed in Remark \ref{rem:hyp}, the (non-)existence of such a solution is determined by computing a Gr\"{o}bner basis, see \cite[Section 5.1]{trigonal} for more details.
\end{rem}

\if0
\section{Introduction}

Throughout this paper, we will use curve to mean a non-singular projective variety of dimension one.
Let $p$ be a rational prime, and let $\mathbb{F}_q$ denote the finite field of $q$ elements, where $q$ is a power of $p$.
For a curve $C$ of genus $g$ over a finite field $\mathbb{F}_q$, we denote by $\# C (K)$ the number of its $K$-rational points, where $K$ is a finite extension of $\mathbb{F}_q$. 
The most well-known (upper) bound on $\#C(\mathbb{F}_q)$ is the Hasse-Weil bound:
\[
\# C(\mathbb{F}_q) \leq q + 1 + 2 g \sqrt{q}.
\]
A curve attaining the Hasse-Weil upper bound is called an $\mathbb{F}_q$-maximal curve.
Serre proved in the 1980s that
\[
\# C(\mathbb{F}_q) \leq q + 1 + 2 g \lfloor \sqrt{q} \rfloor.
\]
See also \cite{??}, \cite{??} and \cite{??} for further improvements of the Hasse-Weil bound when $g$ is larger with respect to $q$.

According to Serre \cite{}, we denote by $N_q(g)$ the maximal number of $\mathbb{F}_q$-rational points on a curve $C$ of genus $g$ over $\mathbb{F}_q$, namely,
\[
N_q(g) = \max \{ \# C (\mathbb{F}_q) : \mbox{$C$ is a curve of genus $g$ over $\mathbb{F}_q$}\}.
\]
Here, an important problem is: determine (or bound) $N_q(g)$, and also find a curve of genus $g$ over $\mathbb{F}_q$ attaining $N_q(g)$, or its known best possible bound.

The ``smallest" case among the unknown cases is $(q,g)=(9,5)$.
The maximal number of the $\F_9$-rational points of the known $\F_9$-rational points on curves of genus $5$ over $\F_9$ is $32$.
But the theoretical upper-bound of $N_9(5)$ is $35$ \cite{Lauter},
i.e, so far we know 
$32 \leq N_{9}(5) \leq 35$, see \cite{ManyPoints}.

The first example attaining $32$ was found by van der Geer - van der Vlugt \cite{GV}, and other two examples were known, one was found by Fischer 
(cf. \cite{ManyPoints})
and the other was found by Ramos-Ramos in \cite{Ramos-Ramos}.
In \cite{??}, the authors enumerated generic curves $C$ of genus $5$ such that $\# C (\mathbb{F}_9)$ is maximal.
The problem for trigonal and hyperelliptic cases is left.
The aim of this paper is to investigate the (non-)existence of trigonal or hyperelliptic curve of genus $5$ over $\mathbb{F}_3$ with many $\mathbb{F}_9$-rational points.
For this, we give a reduction of defining equations of hyperelliptic and trigonal curves of genus $5$ in characteristic $3$.

\begin{theo}\label{thm:main}
The maximal numbers of $\# T(\mathbb{F}_{9})$ (resp.\ $\# $) of genus-five trigonal curves $T$ over $\mathbb{F}_3$ is $30$.
Moreover, there are exactly eight $\mathbb{F}_{9}$-isomorphism classes of genus-five trigonal curves $T$ over $\mathbb{F}_3$ with $30$ $\mathbb{F}_{9}$-rational points, whose Weil polynomials are
\begin{equation}\label{Weil:1}
(t + 3)^4 (t^6 + 8 t^5 + 44 t^4 + 149 t^3 + 396 t^2 + 648 t + 729); 
\end{equation}
\begin{equation}\label{Weil:2}
(t^2 + 5 t + 9)(t^8 + 15 t^7 + 112 t^6 + 549 t^5 + 1927 t^4 + 4941 t^3 + 9072 t^2 + 10935 t + 6561);
\end{equation}
\begin{equation}
\begin{split}
& t^{10} + 20 t^9 + 196 t^8 + 1247 t^7 + 5714 t^6 + 19667 t^5 \\
 & + 51426 t^4 + 101007 t^3 + 142884 t^2 + 131220 t + 59049; 
 \end{split}\label{Weil:3}
\end{equation}
\begin{equation}\label{Weil:4}
(t^2 + 2 t + 9) (t^4 + 9 t^3 + 37 t^2 + 81 t + 81)^2;
\end{equation}
\begin{equation}\label{Weil:5}
\begin{split}
& t^{10} + 20 t^9 + 200 t^8 + 1299 t^7 + 6030 t^6 + 20843 t^5 \\
& + 54270 t^4 + 105219 t^3 + 145800 t^2 + 131220 t + 59049;
\end{split}
\end{equation}
\begin{equation}\label{Weil:6}
(t + 3)^2 (t^2 + 5 t + 9) (t^6 + 9 t^5 + 49 t^4 + 177 t^3 + 441 t^2 + 729 t + 729);
\end{equation}
\begin{equation}\label{Weil:7}
\begin{split}
& t^{10} + 20 t^9 + 194 t^8 + 1210 t^7 + 5433 t^6 + 18539 t^5 \\
& + 48897 t^4 + 98010 t^3 + 141426 t^2 + 131220 t + 59049;
\end{split}
\end{equation}
\begin{equation}\label{Weil:8}
(t^2 + 2 t + 9)(t^4 + 9 t^3 + 37 t^2 + 81 t + 81)^2.
\end{equation}
In the proof below, examples of genus-five trigonal curves $T$ over $\mathbb{F}_3$ with $\# T (\mathbb{F}_{9}) = 30$ will be given.
\end{theo}

\begin{theo}\label{thm:main2}
The maximal number of $\# H(\mathbb{F}_{9})$ of genus-five hyperelliptic curves $H$ over $\mathbb{F}_3$ is $20$.
Moreover, there are exactly ?? $\mathbb{F}_{9}$-isomorphism classes of genus-five hyperelliptic curves $H$ over $\mathbb{F}_3$ with $20$ $\mathbb{F}_{9}$-rational points, whose Weil polynomials are
In the proof below, examples of genus-five hyperelliptic curves $H$ over $\mathbb{F}_3$ with $\# H (\mathbb{F}_{9}) = 20$ will be given.
\end{theo}

\section{Reduction of defining equations}
\subsection{Reduction for hyperelliptic curves}

Let $K$ be a perfect field of odd characteristic $p > 2$.

\begin{lem}[\cite{KH18}, Lemma 2]
Assume that $p$ and $2g+2$ are coprime.
Let $\epsilon \in K^\times \smallsetminus (K^\times)^2$.
Any hyperelliptic curve $C$ of genus $g$ over $K$ is the desingularization of the homogenization of
\[
c y^2 = x^{2g+2} + b x^{2g} + a_{2g-1}x^{2g-1} + \cdots + a_1 x + a_0
\]
for $a_i \in K$ with $0 \leq i \leq 2g-1$ where $b= 0, 1,\epsilon$ and $c=1,\epsilon$.
\end{lem}

\begin{lem}\label{ReductionHyperelliptic}
Assume that $p$ divides $2g+2$.
Let $\epsilon \in K^\times \smallsetminus (K^\times)^2$.
Any hyperelliptic curve $C$ of genus $g$ over $K$ is the desingularization of the homogenization of
\[
c y^2 = x^{2g+2} + b_1 x^{2g+1} + b_2 x^{2g} + a_{2g-1}x^{2g-1} + \cdots + a_1 x + a_0
\]
for $a_i \in K$ with $0 \leq i \leq 2g-1$, where $(b_1,b_2)= (1,0), (0,0), (0,1), (0,\epsilon)$ and $c=1,\epsilon$.
\end{lem}
\begin{proof}
A hyperelliptic curve $C$ over $K$ is realized as $y^2 = f(x)$ for a polynomial $f(x)$ of degree $2g+2$ over $K$.
This can be expressed as $c y^2 = h_0(x)$ for $c\in K^\times$ and for a monic polynomial $h_0(x)$ of degree $2g+2$ over $K$.
Considering the transformation $(x,y) \mapsto (x,\alpha y)$ for some $\alpha\in K^\times$, one may assume $c=1$ or $\epsilon$.
Considering $x \to x + \beta$, we can transform $h_0(x)$ to a polynomial
\begin{eqnarray}
h_1(x) &= & \sum_{j=0}^{2g+2} \binom{2g+2}{j} \beta^{j} x^{2g+2-j} + \sum_{i=0}^{2g+1} a_{i} \sum_{j=0}^{i} \binom{i}{j} \beta^{j} x^{i-j}  \nonumber \\
& = & x^{2g+2} + \left( \binom{2g+2}{1} \beta + a_{2g+1} \right) x^{2g+1} + \left( \binom{2 g + 2}{2} {\beta}^2 + a_{2g+1} \binom{2g+1}{1} \beta + a_{2g} \right) x^{2g} \nonumber \\
& & + \; (\mbox{lower terms in $x$}). \nonumber
\end{eqnarray}
Since $\binom{2g+2}{1} \equiv 0 \bmod{p}$ and $\binom{2g+2}{2} \equiv 0 \bmod{p}$, we have
\[
h_1(x) = x^{2g+2} + a_{2g+1} x^{2g+1} + \left( a_{2g+1} \beta + a_{2g} \right) x^{2g}  + (\mbox{lower terms in $x$}) .
\]
The transformation $(x,y) \mapsto (\gamma x, \gamma^{g+1}y)$ for some $\gamma \in K^\times$ and the multiplication by $\gamma^{-(2g+2)}$ to the whole of $c y^2 = h_1(x)$, we may assume $C$ is defined by an equation of the form
\begin{equation}
c y^2 = x^{2g+2} + a_{2g+1} \gamma^{-1} x^{2g+1} + \left( a_{2g+1} \beta + a_{2g} \right) \gamma^{-2} x^{2g} + (\mbox{lower terms in $x$}). \label{eq:hyp1}
\end{equation}
If $a_{2g+1} \neq 0$, we may assume that the coefficient of $x^{2g+1}$ (resp.\ $x^{2g}$) in \eqref{eq:hyp1} is $1$ (resp.\ $0$), by putting $\gamma := a_{2g+1}$ and $\beta = - a_{2g} / a_{2g+1}$.
Otherwise we may assume that the $x^{2g}$-coefficient in \eqref{eq:hyp1} is either of $0$, $1$ and $\epsilon$.
\end{proof}

\subsection{Reduction for trigonal curves of genus five in characteristic three}

Let $K$ be a finite field of characteristic $p=3$.
Throughout this section, let $q$ be the cardinality of $K$ and $\zeta$ a primitive element of $K^\times$.
As quintic models of trigonal curves of genus $5$ over $K$, we have the following three types:
\begin{enumerate}
\item[] {\bf (Split node case)} $C'=V(F)$ for some $F=xyz^3+f$;
\item[] {\bf (Non-split node case)} $C'=V(F)$ for some $F=(x^2-\epsilon y^2)z^3+f$ with $\epsilon \in K^{\times} \smallsetminus (K^\times)^2$;
\item[] {\bf (Cusp case)} $C'=V(F)$ for some $F=x^2z^3+f$,
\end{enumerate}
where $f$ is the sum of monomial terms which can not be divided by $z^3$.
Note that in the cusp case, the $y^3z^2$-coefficient of $f$ has to be non-zero.

In the following, we shall give reduced forms of $F$ in each of the above three cases.


\begin{prop}[Split node case]\label{ReductionSplitNode}
Any genus-five trigonal curve over $K$ of split node type
has a quintic model in $\mathbb{P}^2$ of the form
\begin{enumerate}
\item[\rm (1)] for $a_i\in K$,
\begin{equation}
\begin{split}
F & =  xyz^3 + (x^3 + b_1 x^2 y + a_1 x y^2 + a_2 y^3) z^2
+ (a_3 x^2 y^2 + a_4 x y^3 + a_5 y^4) z \\
& \quad + a_6 x^5 + a_7 x^4 y + a_8 x^3 y^2 + a_9 x^2 y^3 + a_{10} x y^4 + a_{11} y^5, \label{SplitNodeReducedEq1}
\end{split}
\end{equation}
where $b_1\in \{0, 1\}$.
\item[\rm (2)] for $a_i\in K$,
\begin{equation}
\begin{split}
F & =  xyz^3 + (x^2 y + b_1 x y^2 + a_2 y^3) z^2
+ (a_3 x^4 + a_4 x y^3 + a_5 y^4) z \\
& \quad + a_6 x^5 + a_7 x^4 y + a_8 x^3 y^2 + a_9 x^2 y^3 + a_{10} x y^4 + a_{11} y^5, \label{SplitNodeReducedEq2}
\end{split}
\end{equation}
where $b_1\in \{0, 1\}$.
\item[\rm (3)] for $a_i\in K$,
\begin{equation}
\begin{split}
F & =  xyz^3 + (x y^2 + b_2 y^3) z^2
+ (a_3 x^4 + a_4 x^3 y + a_5 y^4) z \\
& \quad + a_6 x^5 + a_7 x^4 y + a_8 x^3 y^2 + a_9 x^2 y^3 + a_{10} x y^4 + a_{11} y^5, \label{SplitNodeReducedEq3}
\end{split}
\end{equation}
where $b_1\in \{0, 1\}$.
\item[\rm (4)] for $a_i\in K$,
\begin{equation}
\begin{split}
F & =  xyz^3 + y^3 z^2
+ (a_3 x^4 + a_4 x^3 y + a_5 x^2 y^2) z \\
& \quad + a_6 x^5 + a_7 x^4 y + a_8 x^3 y^2 + a_9 x^2 y^3 + a_{10} x y^4 + a_{11} y^5, \label{SplitNodeReducedEq4}
\end{split}
\end{equation}
where $b_1\in \{0, 1\}$.
\item[\rm (5)] for $a_i\in K$,
\begin{equation}
\begin{split}
F &= xyz^3 + (a_1 x^4 + b_2 x^3 y + b_3 x^2 y^2 + a_4 x y^3 + a_5 y^4) z\\
&\quad + a_6 x^5 + a_7 x^4 y + a_8 x^3 y^2 + a_9 x^2 y^3 + a_{10} x y^4 + a_{11} y^5, \label{SplitNodeReducedEq5}
\end{split}
\end{equation}
where $b_2 \in \{0, 1, \zeta\}$ and $b_3 \in \{ 0, 1 \}$.
\end{enumerate}
\end{prop}

\begin{proof}
Let $F_0$ be a quintic, say
\begin{equation}\label{eq:SplitNodeQuinticGeneral}
\begin{split}
F_0 &= xyz^3 + (a_1 x^3 + a_2 x^2 y + a_3 x y^2 + a_4 y^3) z^2
+ (a_5 x^4 + a_6 x^3 y + a_7 x^2 y^2 + a_8 x y^3 + a_9 y^4) z\\
&\quad + a_{10} x^5 + a_{11} x^4 y + a_{12} x^3 y^2 + a_{13} x^2 y^3 + a_{14} x y^4 + a_{15} y^5.
\end{split}
\end{equation}
Considering $z \to z + \alpha x + \beta y$, we can transform the quintic \eqref{eq:SplitNodeQuinticGeneral} to a quintic of the form:
\begin{equation}
\begin{split}
F &= xyz^3 + (a_1 x^3 + a_2 x^2 y + a_3 x y^2 + a_4 y^3) z^2 
+ f_1 z + f_0,
\end{split}
\end{equation}
where
\begin{equation}\label{eq:f1}
\begin{split}
f_1 &= (2 \alpha a_1 + a_5) x^4 + (2 \beta a_1 + 2 \alpha a_2 + a_6) x^3 y 
+ (2 \beta a_2 + 2 \alpha a_3 + a_7) x^2 y^2 \\
& \quad + (2 \beta a_3 + 2 \alpha a_4 + a_8) x y^3 + (2 \beta a_4 + a_9) y^4
\end{split}
\end{equation}
and $f_0$ is a quintic form in $x$ and $y$.
\begin{enumerate}
\item ($a_1 \neq 0$) Putting $\alpha := - a_5 / (2 a_1)$ and $\beta := - (2 \alpha a_2 + a_6)/ (2 a_1)$, we may assume $a_5 = a_6 = 0$ in \eqref{eq:SplitNodeQuinticGeneral}.
Considering $(x,y) \to (\gamma x,\delta y)$ and the multiplication by $(\gamma \delta)^{-1}$, the coefficients $a_1$ and $a_2$ are transformed into $\gamma^2 \delta^{-1} a_1$ and $\gamma a_2$ respectively. 
Putting $\gamma := a_2^{-1}$ (resp.\ $\gamma := 1$) if $a_2 \neq 0$ (resp.\ $a_2 = 0$) and $\delta := \gamma^2 a_1$, we may assume that the coefficients of $x^3 z^2$ and $x^2 y z^2$ are $1$ and $0,1$ respectively.

\item ($a_1 = 0$ and $a_2 \neq 0$) Putting $\alpha := - a_6 / (2 a_2)$ and $\beta := - (2 \alpha a_3 + a_7)/ (2 a_2)$, we may assume $a_6 = a_7 = 0$ in \eqref{eq:SplitNodeQuinticGeneral}.
Considering $(x,y) \to (\gamma x,\delta y)$ and the multiplication by $(\gamma \delta)^{-1}$, the coefficients $a_2$ and $a_3$ are transformed into $\gamma a_2$ and $\delta a_3$ respectively.
Putting $\gamma := a_2^{-1}$ and $\delta := a_3^{-1}$ (resp.\ $\delta := 1$) if $a_3 \neq 0$ (resp.\ $a_3 = 0$), we may assume that the coefficients of $x^2 y z^2$ and $x y^2 z^2$ are $1$ and $0,1$ respectively.

\item ($a_1 = a_2 = 0$ and $a_3 \neq 0$) Putting $\alpha := - a_7 / (2 a_3)$ and $\beta := - (2 \alpha a_4 + a_8)/ (2 a_3)$, we may assume $a_7 = a_8 = 0$ in \eqref{eq:SplitNodeQuinticGeneral}.
Considering $(x,y) \to (\gamma x,\delta y)$ and the multiplication by $(\gamma \delta)^{-1}$, the coefficients $a_3$ and $a_4$ are transformed into $\delta a_3$ and $\gamma^{-1}\delta^2 a_4$ respectively.
Putting $\delta := a_3^{-1}$ and $\gamma := \delta^2 a_4$ (resp.\ $\gamma := 1$) if $a_4 \neq 0$ (resp.\ $a_4 = 0$), we may assume that the coefficients of $x y^2 z^2$ and $y^3 z^2$ are $1$ and $0,1$ respectively.

\item ($a_1 = a_2 = a_3 = 0$ and $a_4 \neq 0$) Putting $\alpha := - a_8 / (2 a_4)$ and $\beta := - a_9/ (2 a_4)$, we may assume $a_8 = a_9 = 0$ in \eqref{eq:SplitNodeQuinticGeneral}.
Considering $(x,y) \to (\gamma x,\delta y)$ and the multiplication by $(\gamma \delta)^{-1}$, the coefficient $a_4$ is transformed into $\gamma^{-1}\delta^2 a_4$.
Putting $\gamma := a_4$ and $\delta := 1$, we may assume that the coefficient of $y^3 z^2$ is $1$.

\item ($a_1 = a_2 = a_3 = a_4 = 0$) Considering $(x,y) \to (\gamma x,\delta y)$ and the multiplication by $(\gamma \delta)^{-1}$, the coefficients $a_6$ and $a_7$ are transformed into $\gamma^2 a_6$ and $\gamma \delta a_7$ respectively.
If $a_6 = 0$, put $\gamma := 1$ and $\delta := a_7^{-1}$ if $a_7 \neq 0$.
If $a_6 \neq 0$, write $a_6 = \zeta^{2 i + j}$ for $i \in \{0, \ldots , (q-3)/2 \}$ and $j \in \{0, 1 \}$, and put $\gamma := \zeta^{-i}$ and $\delta := (\gamma a_7)^{-1}$ if $a_7 \neq 0$.
\end{enumerate}



\end{proof}

Let $K$ be a perfect field of characteristic $3$.
Let $\epsilon$ be an element of $K^\times \smallsetminus (K^\times)^2$.


\begin{prop}[Non-split node case]\label{ReductionNonSplitNode}
Any genus-five trigonal curve over $K$ of non-split node type
has a quintic model in $\mathbb{P}^2$ of the form
\begin{enumerate}
\item[\rm (1)] 
for $a_i\in K$,
\begin{equation}\label{NonSplitNodeReducedEq1}
\begin{split}
F &= (x^2-\epsilon y^2)z^3 + 
\left\{a_1 x(x^2-\epsilon y^2) +  
a_2x^3 + y^3\right\} z^2 \\
&\quad + (a_3 x^4 + a_4 x^3 y + a_5 x^2 y^2) z \\
&\quad + a_6 x^5 + a_7 x^4 y + a_8 x^3 y^2 + a_9 x^2 y^3 + a_{10} x y^4 + a_{11} y^5.
\end{split}
\end{equation}
\item[\rm (2)]for $a_i\in K$,
\begin{equation}\label{NonSplitNodeReducedEq2}
\begin{split}
F &= (x^2-\epsilon y^2)z^3 + 
\left\{x(x^2-\epsilon y^2) +  
a_1x^3 \right\} z^2 \\
&\quad + (a_2 x^4 + a_3 x^3 y + a_4 y^4) z \\
&\quad + a_5 x^5 + a_6 x^4 y + a_7 x^3 y^2 + a_8 x^2 y^3 + a_{9} x y^4 + a_{10} y^5.
\end{split}
\end{equation}
\item[\rm (3)]
for $a_i\in K$,
\begin{equation}\label{NonSplitNodeReducedEq3}
\begin{split}
F &= (x^2-\epsilon y^2)z^3 + x^3 z^2 \\
&\quad + (a_1 x^2 y^2 + a_2 x y^3 + a_3 y^4) z \\
&\quad + a_4 x^5 + a_5 x^4 y + a_6 x^3 y^2 + a_7 x^2 y^3 + a_8 x y^4 + a_9 y^5.
\end{split}
\end{equation}
\item[\rm (4)] for $a_i\in K$,
\begin{equation}\label{NonSplitNodeReducedEq4}
\begin{split}
F &= (x^2-\epsilon y^2)z^3 +  \\
&\quad + (a_1 x^4 + a_2 x^3 y + a_3 x^2 y^2 + a_4 x y^3 + a_5 y^4) z \\
&\quad + a_6 x^5 + a_7 x^4 y + a_8 x^3 y^2 + a_9 x^2 y^3 + a_{10} x y^4 + a_{11} y^5,
\end{split}
\end{equation}
where $(a_6,\ldots,a_{11})$ is either of $(0,\ldots,0,1,a_{i+1},\ldots, a_{11})$
for $i=6,\ldots,11$.
\end{enumerate}
\end{prop}
\begin{proof}
Let $V$ be the $K$-vector space consisting of cubic forms in $x,y$ over $K$.
As seen in \cite[Lemma 4.1.1]{KH16},
the representation $V$ of $\tilde{\rm C}$
defined by $\gamma x = rx+\epsilon s y$ and $\gamma y = s x +r y$
for
\[
\gamma = \begin{pmatrix}r & \epsilon s\\ s & r\end{pmatrix}\in\tilde{\rm C}
\]
is decomposed as $V_1\oplus V_2$, where
$V_1 = \langle x(x^2-\epsilon y^2), y(x^2-\epsilon y^2)\rangle$ and
$V_2 = \langle x(x^2+3\epsilon y^2), y(3x^2 + \epsilon y^2)\rangle = \langle x^3, y^3\rangle$.
We write
\begin{equation}
\begin{split}
F &= (x^2-\epsilon y^2)z^3 + 
\left\{c_1 x(x^2-\epsilon y^2) + c_2 y(x^2-\epsilon y^2) +  
b_1x^3 + b_2 y^3\right\} z^2 \\
&\quad + (a_1 x^4 + a_2 x^3 y + a_3 x^2 y^2 + a_4 x y^3 + a_5 y^4) z \\
&\quad + a_6 x^5 + a_7 x^4 y + a_8 x^3 y^2 + a_9 x^2 y^3 + a_{10} x y^4 + a_{11} y^5.
\end{split}
\end{equation}
An element $\gamma = \begin{pmatrix}r & \epsilon s\\ s & r\end{pmatrix}$ of $\tilde{\rm C}$
sends $c_1 x(x^2-\epsilon y^2)  + c_2 y(x^2-\epsilon y^2)$ to
\[
(r^2-\epsilon s^2) ((c_1r+c_2s) x(x^2-\epsilon y^2) + (c_1\epsilon s+c_2r) y(x^2-\epsilon y^2)).
\]
As there exists an $(r,s)\in K^2\setminus\{(0,0)\}$ so that $c_1\epsilon s+c_2r=0$,
we may assume that $c_2=0$:
\begin{equation}
\begin{split}
F &= (x^2-\epsilon y^2)z^3 + 
\left\{c_1 x(x^2-\epsilon y^2) +  
b_1x^3 + b_2 y^3\right\} z^2 \\
&\quad + (a_1 x^4 + a_2 x^3 y + a_3 x^2 y^2 + a_4 x y^3 + a_5 y^4) z \\
&\quad + a_6 x^5 + a_7 x^4 y + a_8 x^3 y^2 + a_9 x^2 y^3 + a_{10} x y^4 + a_{11} y^5.
\end{split}
\end{equation}
\begin{enumerate}
\item[(1)] Assume $b_2\ne 0$.
We take the coordinate-change $(x,y,z)\mapsto (x,y,b_2 z)$ and multiply $F$ by $b_2^{-3}$;
then we may assume $b_2 = 1$. 
Considering the transformation sending $z$ to $z - (a_4/2+c_1\epsilon a_5/4) x - (a_5/2) y$,
we may eliminate the terms of $xy^3$
and $y^4$ from $F$, i.e., we may assume $(a_4,a_5)=(0,0)$:
\begin{equation}
\begin{split}
F &= (x^2-\epsilon y^2)z^3 + 
\left\{c_1 x(x^2-\epsilon y^2) +  
b_1x^3 + y^3\right\} z^2 \\
&\quad + (a_1 x^4 + a_2 x^3 y + a_3 x^2 y^2) z \\
&\quad + a_6 x^5 + a_7 x^4 y + a_8 x^3 y^2 + a_9 x^2 y^3 + a_{10} x y^4 + a_{11} y^5.
\end{split}
\end{equation}

\item[(2)]Assume $b_2=0$ and $c_1 \ne 0$. 
We take the coordinate-change $(x,y,z)\mapsto (x,y,c_1 z)$ and multiply $F$ by $c_1^{-3}$;
then we may assume $c_1 = 1$.
Considering the transformation sending $z$ to $z - (a_3/(-2\epsilon)) x - (a_4/(-2\epsilon)) y$,
we may eliminate the terms of $x^2y^2$
and $xy^3$ from $F$, i.e., we may assume $(a_3,a_4)=(0,0)$:
\begin{equation}
\begin{split}
F &= (x^2-\epsilon y^2)z^3 + 
\left\{x(x^2-\epsilon y^2) +  
b_1x^3 \right\} z^2 \\
&\quad + (a_1 x^4 + a_2 x^3 y + a_5 y^4) z \\
&\quad + a_6 x^5 + a_7 x^4 y + a_8 x^3 y^2 + a_9 x^2 y^3 + a_{10} x y^4 + a_{11} y^5.
\end{split}
\end{equation}

\item[(3)]Assume $b_2=0$ and $c_1 = 0$ and $b_1 \ne 0$.
Similarly $F$ is reduced to
\begin{equation}
\begin{split}
F &= (x^2-\epsilon y^2)z^3 + x^3 z^2 \\
&\quad + (a_3 x^2 y^2 + a_4 x y^3 + a_5 y^4) z \\
&\quad + a_6 x^5 + a_7 x^4 y + a_8 x^3 y^2 + a_9 x^2 y^3 + a_{10} x y^4 + a_{11} y^5.
\end{split}
\end{equation}
\end{enumerate}
\end{proof}

Let $\epsilon$ be an element of $K^{\times} \smallsetminus (K^\times)^2$.

\begin{prop}[Non-split node case]\label{ReductionNonSplitNode}
Any trigonal curve over $K$ of non-split node type
has a quintic model in $\mathbb{P}^2$ of the form
\begin{enumerate}
\item[\rm (1)] for $a_i\in K$,
\begin{equation}
\begin{split}
F & =  (x^2 - \epsilon y^2) z^3 + (x^3 + a_1 x^2 y + a_2 x y^2 + a_3 y^3) z^2
+ (a_4 x^2 y^2 + a_5 x y^3 + a_6 y^4) z \\
& \quad + a_7 x^5 + a_8 x^4 y + a_9 x^3 y^2 + a_{10} x^2 y^3 + a_{11} x y^4 + a_{12} y^5. \label{NonSplitNodeReducedEq1}
\end{split}
\end{equation}
\item[\rm (2)] for $a_i\in K$,
\begin{equation}
\begin{split}
F & =  (x^2 - \epsilon y^2) z^3 + (x^2 y + a_1 x y^2 + a_2 y^3) z^2
+ (a_3 x^4 + a_4 x y^3 + a_5 y^4) z \\
& \quad + a_6 x^5 + a_7 x^4 y + a_8 x^3 y^2 + a_9 x^2 y^3 + a_{10} x y^4 + a_{11} y^5. \label{NonSplitNodeReducedEq2}
\end{split}
\end{equation}
\item[\rm (3)] for $a_i\in K$,
\begin{equation}
\begin{split}
F & =   (x^2 - \epsilon y^2) z^3 + (x y^2 + a_1 y^3) z^2
+ (a_2 x^4 + a_3 x^3 y + a_4 y^4) z \\
& \quad + a_5 x^5 + a_6 x^4 y + a_7 x^3 y^2 + a_8 x^2 y^3 + a_{9} x y^4 + a_{10} y^5. \label{NonSplitNodeReducedEq3}
\end{split}
\end{equation}
\item[\rm (4)] for $a_i\in K$,
\begin{equation}
\begin{split}
F & =   (x^2 - \epsilon y^2) z^3 + y^3 z^2
+ (a_1 x^4 + a_2 x^3 y + a_3 x^2 y^2) z \\
& \quad + a_4 x^5 + a_5 x^4 y + a_6 x^3 y^2 + a_7 x^2 y^3 + a_{8} x y^4 + a_{9} y^5. \label{NonSplitNodeReducedEq4}
\end{split}
\end{equation}
\item[\rm (5)] for $a_i\in K$,
\begin{equation}
\begin{split}
F &=  (x^2 - \epsilon y^2) z^3 + (b_1 x^4 + a_2 x^3 y + a_3 x^2 y^2 + a_4 x y^3 + a_5 y^4) z\\
&\quad + a_6 x^5 + a_7 x^4 y + a_8 x^3 y^2 + a_9 x^2 y^3 + a_{10} x y^4 + a_{11} y^5. \label{NonSplitNodeReducedEq5}
\end{split}
\end{equation}
where $b_1 \in \{ 0, 1 \}$.
\end{enumerate}
\end{prop}

\begin{proof}
Let $F_0$ be a quintic, say
\begin{equation}\label{eq:NonSplitNodeQuinticGeneral}
F_0 = (x^2 - \epsilon y^2) z^3 + f_2 z^2 + f_1 z + f_0,
\end{equation}
where
\begin{eqnarray}
f_2 &=& a_1 x^3 + a_2 x^2 y + a_3 x y^2 + a_4 y^3,\\
f_1 &=& a_7 x^2 y^2 + a_8 xy^3 + a_9 y^4, \\
f_0 &=& a_{10} x^5 + a_{11} x^4 y + a_{12} x^3 y^2 + a_{13} x^2 y^3 + a_{14} x y^4 + a_{15} y^5.
\end{eqnarray}
Considering $z \to z + \alpha x + \beta y$, we can transform the quintic \eqref{eq:NonSplitNodeQuinticGeneral} to a quintic of the form:
\begin{equation}\label{eq:NonSplitNodeQuinticGeneral2}
F = (x^2 - \epsilon y^2) z^3 + f_2 z^2  + f_1^{'} z + f_0^{'},
\end{equation}
where
\begin{equation}\label{eq:g1}
\begin{split}
f_1^{'} &= (2 \alpha a_1 + a_5) x^4 + (2 \beta a_1 + 2 \alpha a_2 + a_6) x^3 y 
+ (2 \beta a_2 + 2 \alpha a_3 + a_7) x^2 y^2 \\
& \quad + (2 \beta a_3 + 2 \alpha a_4 + a_8) x y^3 + (2 \beta a_4 + a_9) y^4
\end{split}
\end{equation}
and $f_0^{'}$ is a quintic form in $x$ and $y$.
By $(x,y,z) \to (\gamma x, \gamma y, \delta z)$ together with the multiplication by $(\gamma^2 \delta^3)^{-1}$, the quintic form \eqref{eq:NonSplitNodeQuinticGeneral2} is transformed into
\begin{equation}\label{eq:NonSplitNodeQuinticGeneral3}
\begin{split}
F &= (x^2 - \epsilon y^2) z^3 + \gamma \delta^{-1} f_2 z^2 
+ (\gamma \delta^{-1})^2 f_1^{\prime} z + (\gamma \delta^{-1})^3 f_0^{\prime}.
\end{split}
\end{equation}

\begin{enumerate}
\item ($a_1 \neq 0$) Putting $\alpha := - a_5 / (2 a_1)$, $\beta := - (2 \alpha a_2 + a_6)/ (2 a_1)$ and $\gamma \delta^{-1} := a_1$, we may assume that the coefficients of $x^3 z^2$, $x^4z$ and $x^3 yz$ in \eqref{eq:NonSplitNodeQuinticGeneral3} are $1$, $0$ and $0$ respectively.

\item ($a_1 = 0$ and $a_2 \neq 0$) Putting $\alpha := - a_6 / (2 a_2)$, $\beta := - (2 \alpha a_3 + a_7)/ (2 a_2)$ and $\gamma \delta^{-1} := a_2^{-1}$, we may assume that the coefficients of $x^2 y z^2$, $x^3 y z$ and $x^2 y^2z$ in \eqref{eq:NonSplitNodeQuinticGeneral3} are $1$, $0$ and $0$ respectively.

\item ($a_1 = a_2 = 0$ and $a_3 \neq 0$) Putting $\alpha := - a_7 / (2 a_3)$, $\beta := - (2 \alpha a_4 + a_8)/ (2 a_3)$ and $\gamma \delta^{-1} := a_3^{-1}$, we may assume that the coefficients of $x y^2 z^2$, $x^2 y^2 z$ and $x y^3 z$ in \eqref{eq:NonSplitNodeQuinticGeneral3} are $1$, $0$ and $0$ respectively.

\item ($a_1 = a_2 = a_3 = 0$ and $a_4 \neq 0$) Putting $\alpha := - a_8 / (2 a_4)$, $\beta := - a_9/ (2 a_4)$ and $\gamma \delta^{-1} := a_4^{-1}$, we may assume that the coefficients of $y^3 z^2$, $x y^3 z$ and $y^4 z$ in \eqref{eq:NonSplitNodeQuinticGeneral3} are $1$, $0$ and $0$ respectively.

\item ($a_1 = a_2 = a_3 = a_4 = 0$) Putting $\gamma \delta^{-1} := a_5^{-1}$ if $a_5 \neq 0$, we may assume that the coefficient of $x^4 z$ in \eqref{eq:NonSplitNodeQuinticGeneral3} is $1$.
\end{enumerate}



\end{proof}


\begin{prop}[Cusp case]\label{ReductionCusp}
Any genus-five trigonal curve over $K$ of cusp type has a quintic model in $\mathbb{P}^2$ of the form 
\begin{equation}
\begin{split}
F & =  x^2 z^3 + (b_1 x^3 + b_2 x^2 y + a_1 x y^2 + a_2 y^3) z^2
+ (a_3 x^4 + a_4 x^3 y + a_5 x^2 y^2) z \\
& \quad + a_6 x^5 + a_7 x^4 y + a_8 x^3 y^2 + a_9 x^2 y^3 + a_{10} x y^4 + a_{11} y^5 \label{CuspReducedEq1}
\end{split}
\end{equation}
for $a_i\in K$ with $a_2 \neq 0$, where $b_1,b_2\in \{0, 1\}$.
\end{prop}

\begin{proof}
Recall from the paragraph at the beginning of this section that any trigonal curve of genus $5$ over $K$ of cusp type has a quintic model in $\mathbb{P}^2$ of the form  
\begin{equation}\label{eq:CuspQuinticGeneral}
\begin{split}
F_0 &= x^2 z^3 + (a_1 x^3 + a_2 x^2 y + a_3 x y^2 + a_4 y^3) z^2
+ (a_5 x^4 + a_6 x^3 y + a_7 x^2 y^2 + a_8 x y^3 + a_9 y^4) z\\
&\quad + a_{10} x^5 + a_{11} x^4 y + a_{12} x^3 y^2 + a_{13} x^2 y^3 + a_{14} x y^4 + a_{15} y^5,
\end{split}
\end{equation}
where $a_i \in K$ with $a_4 \neq 0$.
Considering $z \to z + \alpha x + \beta y$, we can transform the quintic \eqref{eq:CuspQuinticGeneral} to a quintic of the form:
\begin{equation}
\begin{split}
F &= x^2 z^3 + (a_1 x^3 + a_2 x^2 y + a_3 x y^2 + a_4 y^3) z^2 
+ f_1 z + f_0,
\end{split}
\end{equation}
where $f_1$ is the same as in \eqref{eq:f1}, and where $f_0$ is a quintic form in $x$ and $y$.
Putting $\beta := - a_9 / (2 a_4)$ and $\alpha := - (2 \beta a_3 + a_8)/(2a_4)$, we may assume $a_8 = a_9 = 0$ in \eqref{eq:CuspQuinticGeneral}.
Considering $(x,y) \to (\gamma x,\delta y)$ and the multiplication by $\gamma^{-2}$, the coefficients $a_1$ and $a_2$ are transformed into $\gamma a_1$ and $\delta a_2$ respectively.
Thus, we may assume that the coefficients of $x^3 z^2$ and $x^2 y z^2$ are $0$ or $1$.
\end{proof}

\section{Main results and their proofs}
\subsection{Hyperelliptic case}

\subsection{Trigonal case}
Let us restate the statement.
\begin{theo}\label{thm:main}
The maximal number of $\#C(\mathbb{F}_{9})$ of genus-five trigonal curves $C$ over $\mathbb{F}_3$ is $30$.
Moreover, there are exactly eight $\mathbb{F}_{9}$-isomorphism classes of genus-five trigonal curves $C$ over $\mathbb{F}_3$ with $30$ $\mathbb{F}_{9}$-rational points, whose Weil polynomials are
\begin{equation}\label{Weil:1}
(t + 3)^4 (t^6 + 8 t^5 + 44 t^4 + 149 t^3 + 396 t^2 + 648 t + 729); 
\end{equation}
\begin{equation}\label{Weil:2}
(t^2 + 5 t + 9)(t^8 + 15 t^7 + 112 t^6 + 549 t^5 + 1927 t^4 + 4941 t^3 + 9072 t^2 + 10935 t + 6561);
\end{equation}
\begin{equation}
\begin{split}
& t^{10} + 20 t^9 + 196 t^8 + 1247 t^7 + 5714 t^6 + 19667 t^5 \\
 & + 51426 t^4 + 101007 t^3 + 142884 t^2 + 131220 t + 59049; 
 \end{split}\label{Weil:3}
\end{equation}
\begin{equation}\label{Weil:4}
(t^2 + 2 t + 9) (t^4 + 9 t^3 + 37 t^2 + 81 t + 81)^2;
\end{equation}
\begin{equation}\label{Weil:5}
\begin{split}
& t^{10} + 20 t^9 + 200 t^8 + 1299 t^7 + 6030 t^6 + 20843 t^5 \\
& + 54270 t^4 + 105219 t^3 + 145800 t^2 + 131220 t + 59049;
\end{split}
\end{equation}
\begin{equation}\label{Weil:6}
(t + 3)^2 (t^2 + 5 t + 9) (t^6 + 9 t^5 + 49 t^4 + 177 t^3 + 441 t^2 + 729 t + 729);
\end{equation}
\begin{equation}\label{Weil:7}
\begin{split}
& t^{10} + 20 t^9 + 194 t^8 + 1210 t^7 + 5433 t^6 + 18539 t^5 \\
& + 48897 t^4 + 98010 t^3 + 141426 t^2 + 131220 t + 59049;
\end{split}
\end{equation}
\begin{equation}\label{Weil:8}
(t^2 + 2 t + 9)(t^4 + 9 t^3 + 37 t^2 + 81 t + 81)^2.
\end{equation}
In the proof below, examples of genus-five trigonal curves $C$ over $\mathbb{F}_3$ with $\# C (\mathbb{F}_{9}) = 30$ will be given.
\end{theo}

\begin{proof}
The quintic forms $F$ in split node case (of the form \eqref{SplitNodeReducedEq1} -- \eqref{SplitNodeReducedEq5}) such that the normalization of $V(F) \subset \mathbb{P}^2$ has $30$ $\mathbb{F}_9$-rational points are the following:
\begin{eqnarray}
F_1 & = & x y z^3 + (x^3 + x y^2 + y^3) z^2 + x y^3 z + x^5 + x^3 y^2 + x^2 y^3 + y^5; \nonumber \\
F_2 & = & x y z^3 + (x^3 + 2 x y^2 + y^3) z^2 + (2 x^2 y^2 + 2 x y^3) z + x^5 + 2 x^4 y + x^2 y^3 + 2 x y^4 + y^5; \nonumber \\
F_3  & = & x y z^3 + (x^3 + x y^2 + 2 y^3) z^2 + (x^2 y^2 + x y^3 + 2 y^4) z + x^5 + x^3 y^2 + 2 x^2 y^3 + 2 x y^4 + y^5; \nonumber \\ 
F_4 & = & x y z^3 + (x^3 + x y^2 + y^3) z^2 + x y^3 z + x^5 + x^3 y^2 + 2 x^2 y^3 + 2 y^5; \nonumber \\
F_5 & = & x y z^3 + (x^3 + x y^2 + y^3) z^2 + (2 x^2 y^2 + x y^3 + y^4) z + x^5 + x^3 y^2 + x^2 y^3 + 2 x y^4 + 2 y^5; \nonumber \\
F_6 & = & x y z^3 + (x^3 + 2 x y^2 z^2 + 2 y^3) z^2 + (x^2 y^2 + 2 x y^3) z + x^5 + x^4 y + 2 x^2 y^3 + 2 x y^4 + 2 y^5; \nonumber \\
F_7 & = & x y z^3 + (x^3 + x^2 y + y^3) z^2 + (2 x^2 y^2 + 2 x y^3) z + x^5 + 2 x^4 y + 2 x^2 y^3 + y^5; \nonumber \\
F_8 & = & x y z^3 + (x^3 + x^2 y + 2 y^3) z^2 + (2 x^2 y^2 + 2 y^4) z + x^5 + x^4 y + x y^4 + y^5; \nonumber \\
F_9 & = & x y z^3 + (x^3 + x^2 y + y^3) z^2 + (2 x^2 y^2 + 2 y^4) z + 2 x^5 + x^3 y^2 + 2 x^2 y^3 + x y^4 + 2 y^5; \nonumber \\
F_{10} & = & x y z^3 + (x^3 + x^2 y + 2 x y^2 + 2 y^3) z^2 + 2 x^2 y^2 z + x^5 + x^4 y + 2 x y^4 + 2 y^5. \nonumber
\end{eqnarray}
Note that the quintic forms $F_i$ for $1 \leq i \leq 10$ are of the form \eqref{SplitNodeReducedEq1}.

The quintic forms $F$ in non-split node case (of the form \eqref{NonSplitNodeReducedEq1} -- \eqref{NonSplitNodeReducedEq4}) such that the normalization of $V(F) \subset \mathbb{P}^2$ has $30$ $\mathbb{F}_9$-rational points are the following:
\begin{eqnarray}
F_{11} & = & (x^2 - \epsilon y^2)z^3 + (x (x^2 - \epsilon y^2) + x^3 + y^3) z^2 + x^2 y^2 z + 2 x^5 + 2 x^4 y + x^3 y^2 + y^5; \nonumber \\
F_{12} & = & (x^2 - \epsilon y^2)z^3 + (2 x ( x^2 - \epsilon y^2) + 2 x^3 + y^3) z^2 x^2 y^2 z + x^5 + 2 x^4 y + 2 x^3 y^2 + y^5; \nonumber \\
F_{13} & = & (x^2 - \epsilon y^2)z^3 + (2 x (x^2 - \epsilon y^2) + x^3 + y^3) z^2 + (x^4 + x^3 y)z + x^5 + 2 x^2 y^3 + y^5; \nonumber \\ 
F_{14} & = & (x^2 - \epsilon y^2)z^3 + (x (x^2 - \epsilon y^2) + 2 x^3 + y^3) z^2 + (x^4 + 2 x^3 y) z + 2 x^5 + 2 x^2 y^3 + y^5; \nonumber \\
F_{15} & = & (x^2 - \epsilon y^2)z^3 + (x (x^2 - \epsilon y^2) + y^3) z^2 + (2 x^3 y + 2 x^2 y^2) z + x^5 + x^4 y + x^3 y^2 + 2 x^2 y^3 + y^5; \nonumber \\
F_{16} & = & (x^2 - \epsilon y^2)z^3 + (2 x (x^2 - \epsilon y^2) + y^3) z^2 + (x^3 y + 2 x^2 y^2) z + 2 x^5 + x^4 y + 2 x^3 y^2 + 2 x^2 y^3 + y^5; \nonumber \\
F_{17} & = & (x^2 - \epsilon y^2)z^3 + (x (x^2 - \epsilon y^2) + 2 x^3) z^2 + (x^4 + y^4) z + x^5,  \nonumber 
\end{eqnarray}
where $\epsilon = 2 \in \mathbb{F}_3^{\times} \smallsetminus (\mathbb{F}_3^{\times})^2$.
Note that the quintic forms $F_i$ for $11 \leq i \leq 16$ are of the form \eqref{NonSplitNodeReducedEq1}, and $F_{17}$ is of the form \eqref{NonSplitNodeReducedEq2}.

The quintic forms $F$ in cusp case (of the form \eqref{CuspReducedEq1}) such that the normalization of $V(F) \subset \mathbb{P}^2$ has $30$ $\mathbb{F}_9$-rational points are the following:
\begin{eqnarray}
F_{18} & = & x^2 z^3 + (x^2 y + y^3) z^2 + (x^4 + 2 x^2 y^2)z + x^4 y + y^5; \nonumber \\
F_{19} & = & x^2 z^3 + (x^2 y + 2 y^3) z^2 + (2 x^4 + 2 x^2 y^2) + 2 x^4 y + 2 y^5; \nonumber \\
F_{20} & = & x^2 z^3 + (x^2 y + 2 y^3) z^2 + (x^4 + x^3 y  + 2 x^2 y^2)z + x^5 + 2 x^3 y^2 + 2 x^2 y^3 + x y^4 + 2 y^5; \nonumber \\ 
F_{21} & = & x^2 z^3 + (x^2 y + 2 y^3) z^2 + (x^4 + 2 x^3 y + 2 x^2 y^2)z + 2 x^5 + x^3 y^2 + 2 x^2 y^3 + 2 x y^4 + 2 y^5; \nonumber \\
F_{22} & = & x^2 z^3 + (x^3 + x^2 y + y^3) z^2 + (2 x^3 y + 2 x^2 y^2 )z + x^5 + 2 x^3 y^2 + x^2 y^3 + x y^4 + y^5. \nonumber
\end{eqnarray}
Among the above $22$ curves, there are eight $\mathbb{F}_{9}$-isomorphism classes:
\begin{enumerate}
\item $F_1$, $F_4$ and $F_9$ with Weil polynomial \eqref{Weil:1};
\item $F_2$, $F_6$ and $F_8$ with Weil polynomial \eqref{Weil:2};
\item $F_3$, $F_5$ and $F_7$ with Weil polynomial \eqref{Weil:3};
\item $F_{10}$ and $F_{17}$ with Weil polynomial \eqref{Weil:4};
\item $F_{11}$ and $F_{12}$ with Weil polynomial \eqref{Weil:5};
\item $F_{13}$ and $F_{14}$ with Weil polynomial \eqref{Weil:6};
\item $F_{15}$ and $F_{16}$ with Weil polynomial \eqref{Weil:7};
\item $F_{18}$, $F_{19}$, $F_{20}$, $F_{21}$ and $F_{22}$ with Weil polynomial \eqref{Weil:8}.
\end{enumerate}
Also over the algebraic closure $\overline{\mathbb{F}_3}$, there are eight isomorphism classes.

Over the prime field $\mathbb{F}_3$, there are $10$ isomorphism classes:
\begin{enumerate}
\item $F_1$, $F_4$ and $F_9$ with Weil polynomial \eqref{Weil:1};
\item $F_2$, $F_6$ and $F_8$ with Weil polynomial \eqref{Weil:2};
\item $F_3$, $F_5$ and $F_7$ with Weil polynomial \eqref{Weil:3};
\item $F_{10}$ with Weil polynomial \eqref{Weil:4};
\item $F_{11}$ and $F_{12}$ with Weil polynomial \eqref{Weil:5};
\item $F_{13}$ and $F_{14}$ with Weil polynomial \eqref{Weil:6};
\item $F_{15}$ and $F_{16}$ with Weil polynomial \eqref{Weil:7};
\item $F_{17}$ with Weil polynomial \eqref{Weil:4};
\item $F_{18}$ and $F_{22}$ with Weil polynomial \eqref{Weil:8};
\item $F_{19}$, $F_{20}$ and $F_{21}$ with Weil polynomial \eqref{Weil:8}.
\end{enumerate}
\end{proof}

\begin{theo}\label{thm:main2}
The maximal number of $\#C(\mathbb{F}_{3})$ of genus-five trigonal curves $C$ over $\mathbb{F}_3$ is $12$.
Moreover, there are exactly nine $\mathbb{F}_{3}$-isomorphism classes of genus-five trigonal curves $C$ over $\mathbb{F}_3$ with $12$ $\mathbb{F}_{3}$-rational points.
\end{theo}

\begin{proof}
The quintic forms $F$ in split node case (of the form \eqref{SplitNodeReducedEq1} -- \eqref{SplitNodeReducedEq5}) such that the normalization of $V(F) \subset \mathbb{P}^2$ has $12$ $\mathbb{F}_3$-rational points are the following:
\begin{eqnarray}
F_1 & = & x y z^3 + (x^3 + x^2 y + 2 x y^2 + 2 y^3) z^2 + (2 x^2 y^2 + 2 x y^3 + y^4) z + 2 x^5 + x^3 y^2; \nonumber \\
F_2 & = & x y z^3 + (x^3 + x^2 y + 2 x y^2 + 2 y^3) z^2 + (x^2 y^2 + 2 x y^3 + 2 y^4) z + 2 x^5 + 2 x^4 y + x^3 y^2 + x^2 y^3 \nonumber \\
F_3  & = & x y z^3 + (x^3 + x^2 y + 2 x y^2 + 2 y^3) z^2 + (2 x^2 y^2 + 2 x y^3 + y^4) z + 2 x^5 + x^4 y + x^3 y^2 + 2 x^2 y^3; \nonumber \\
F_4 & = & x y z^3 + (x^3 + x^2 y + 2 x y^2 + 2 y^3) z^2 + (x^2 y^2 + 2 x y^3 + 2 y^4) z + 2 x^5 + x^4 y + x^3 y^2 + 2 x^2 y^3; \nonumber \\
F_5 & = & x y z^3 + (x^3 + x^2 y + 2 x y^2 + 2 y^3) z^2 + (x^2 y^2 + 2 x y^3 + 2 y^4) z + 2 x^5 + x y^4; \nonumber \\
F_6 & = & x y z^3 + (x^3 + x^2 y + 2 x y^2 + 2 y^3) z^2 + (2 x^2 y^2 + 2 x y^3 + y^4) z + 2 x^5 + 2 x^4 y + x^2 y^3 + x y^4; \nonumber \\
F_7 & = & x y z^3 + (x^3 + x^2 y + 2 x y^2 + 2 y^3) z^2 + (x^2 y^2 + 2 x y^3 + 2 y^4) z + 2 x^5 + 2 x^4 y + x^2 y^3 + x y^4; \nonumber \\
F_8 & = & x y z^3 + (x^3 + x^2 y + 2 x y^2 + 2 y^3) z^2 + (2 x^2 y^2 + 2 x y^3 + y^4) z + 2 x^5 + x^4 y + 2 x^2 y^3 + x y^4; \nonumber \\
F_9 & = & x y z^3 + (x^3 + x^2 y + 2 x y^2 + 2 y^3)z^2 + (2 x^2 y^2 + 2 x y^3 + y^4) z + 2 x^5 + 2 x^3 y^2 + 2 x y^4; \nonumber \\
F_{10} & = & x y z^3 + (x^3 + x^2 y + 2 x y^2 + 2 y^3) z^2 + (x^2 y^2 + 2 x y^3 + 2 y^4) z + 2 x^5 + 2 x^4 y + 2 x^3 y^2 + x^2 y^3 + 2 x y^4; \nonumber \\
F_{11} & = & x y z^3 + (x^3 + x^2 y + 2 x y^2 + 2 y^3) z^2 + 2 x y^3 z + 2 x^5 + x^3 y^2 + 2 x^2 y^3 + y^5; \nonumber \\
F_{12} & = & x y z^3 + (x^3 + x^2 y + 2 x y^2 + 2 y^3) z^2 + 2 x y^3 z + 2 x^5 + 2 x^4 y + x y^4 + y^5; \nonumber \\
F_{13} & = & x y z^3 + (x^3 + x^2 y + 2 x y^2 + 2 y^3) z^2 + 2 x y^3 z + 2 x^5 + x^4 y + x^2 y^3 + x y^4 + y^5; \nonumber \\
F_{14} & = & x y z^3 + (x^3 + x^2 y + 2 x y^2 + 2 y^3) z^2 + 2 x y^3 z + 2 x^5 + 2 x^2 y^3 + x y^4 + y^5; \nonumber \\
F_{15} & = & x y z^3 + (x^3 + x^2 y + 2 x y^2 + 2 y^3) z^2 + 2 x y^3 z + 2 x^5 + x^4 y + 2 x^3 y^2 + x^2 y^3 + 2 x y^4 + y^5.\nonumber
\end{eqnarray}
Note that the quintic forms $F_i$ for $1 \leq i \leq 15$ are of the form \eqref{SplitNodeReducedEq1}.

The quintic forms $F$ in cusp case (of the form \eqref{CuspReducedEq1}) such that the normalization of $V(F) \subset \mathbb{P}^2$ has $12$ $\mathbb{F}_3$-rational points are the following:
\begin{eqnarray}
F_{16} & = & x^2 z^3 + (x^2 y + 2 y^3) z^2 + 2 x^4 z + 2 x^2 y^3 + y^5; \nonumber \\
F_{17} & = & x^2 z^3 + (x^2 y + 2 y^3) z^2 + 2 x^4 z + 2 x^4 y + 2 x^3 y^2 + x y^4 + y^5; \nonumber \\
F_{18} & = & x^2 z^3 + (x^2 y + 2 y^3) z^2 + 2 x^4 z + 2 x^4 y + x^3 y^2 + 2 x y^4 + y^5. \nonumber
\end{eqnarray}
Among the above $18$ curves, there are nine $\mathbb{F}_3$-isomorphism classes:
\begin{enumerate}
\item $F_1$ and $F_8$;
\item $F_2$ and $F_{13}$;
\item $F_3$;
\item $F_4$ and $F_{15}$;
\item $F_5$ and $F_{11}$;
\item $F_6$ and $F_9$;
\item $F_7$ and $F_{14}$;
\item $F_{10}$ and $F_{12}$;
\item $F_{16}$, $F_{17}$ and $F_{18}$.
\end{enumerate}
Also over the algebraic closure $\overline{\mathbb{F}_3}$, there are nine isomorphism classes.
\end{proof}

\begin{rem}
The quintic forms $F$ in non-split node case (of the form \eqref{NonSplitNodeReducedEq1} -- \eqref{NonSplitNodeReducedEq4}) such that the normalization of $V(F) \subset \mathbb{P}^2$ has $11$ $\mathbb{F}_3$-rational points are the following:
\begin{eqnarray}
F_{1} & = & (x^2 - \epsilon y^2)z^3 + (x (x^2 - \epsilon y^2) + x^3) z^2 + (2 x^4 + 2 y^4) z + x^5 + 2 x^4 y + 2 x^3 y^2 + x^2 y^3;  \nonumber \\
F_2 & = & (x^2 - \epsilon y^2)z^3 + (x (x^2 - \epsilon y^2) + x^3) z^2 + (2 x^4 + 2 y^4) z + x^5 + x^4 y + 2 x^3 y^2 + 2 x^2 y^3;  \nonumber \\
F_3 & = & (x^2 - \epsilon y^2)z^3 + (x (x^2 - \epsilon y^2) + x^3) z^2 + (2 x^4 + 2 y^4) z + x^5 + 2 x^4 y + x^2 y^3 + 2 x y^4;  \nonumber \\
F_4 & = & (x^2 - \epsilon y^2)z^3 + (x (x^2 - \epsilon y^2) + x^3) z^2 + (2 x^4 + 2 y^4) z + x^5 + x^4 y + 2 x^2 y^3 + 2 x y^4,  \nonumber 
\end{eqnarray}
where $\epsilon = 2 \in \mathbb{F}_3^{\times} \smallsetminus (\mathbb{F}_3^{\times})^2$.
Note that the quintic forms $F_i$ for $1 \leq i \leq 4$ are of the form \eqref{NonSplitNodeReducedEq2}.
\end{rem}

\fi

\footnote[0]{
\noindent E-mail address of the first author: \texttt{kudo@mist.i.u-tokyo.ac.jp}\\
E-mail address of the second author: \texttt{harasita@ynu.ac.jp}
}

\end{document}